\definecolor{darkblue}{rgb}{0.0,0,0.7} 
\definecolor{lightblue}{rgb}{0.4,0.4,1.0} 
\definecolor{darkred}{rgb}{0.7,0,0} 
\definecolor{darkgreen}{rgb}{0, .6, 0} 
\newtheorem{theorem}{Theorem}
\numberwithin{theorem}{section}
\newtheorem{prop}[theorem]{Proposition}
\newtheorem{cor}[theorem]{Corollary}
\newtheorem{lemma}[theorem]{Lemma}
\theoremstyle{definition}
\newtheorem{definition}[theorem]{Definition}
\newtheorem{example}[theorem]{Example}
\newtheorem{remark}[theorem]{Remark}
\def\la{{\lambda}}
\def\al{{\alpha}}
\def\be{{\beta}}
\def\CC{{\mathbb C}}
\def\mt{{\tilde m}}
\def\bT{{\mathcal T}}
\def\oT{{\overline\bT}}
\def\mvdash{{\,\vdash\!\!\vdash}}
\def\dcl{{\{\!\!\{}}
\def\dcr{{\}\!\!\}}}
\def\o{\overline}
\def\MSP{{\rm MSP}}
\newcommand{\defn}[1]{{\color{darkred}\emph{#1}}} 
\newcommand{\pchoose}[2]{\begin{pmatrix}#1\\ #2\end{pmatrix}}
\newdimen\squaresize \squaresize=10pt
\newdimen\thickness \thickness=0.4pt
\def\square#1{\hbox{\vrule width \thickness
     \vbox to \squaresize{\hrule height \thickness\vss
        \hbox to \squaresize{\hss#1\hss}
     \vss\hrule height\thickness}
\unskip\vrule width \thickness}
\kern-\thickness}
\def\vsquare#1{\vbox{\square{$#1$}}\kern-\thickness}
\def\young#1{
\vbox{\smallskip\offinterlineskip
\halign{&\vsquare{##}\cr #1}}}
\def\thisbox#1{\kern-.09ex\fbox{#1}}
\def\downbox#1{\lower1.200em\hbox{#1}}
\newdimen\Squaresize \Squaresize=15pt
\newdimen\Thickness \Thickness=0.4pt
\def\Square#1{\hbox{\vrule width \Thickness
     \vbox to \Squaresize{\hrule height \Thickness\vss
        \hbox to \Squaresize{\hss#1\hss}
     \vss\hrule height\Thickness}
\unskip\vrule width \Thickness}
\kern-\Thickness}
\def\Vsquare#1{\vbox{\Square{$#1$}}\kern-\Thickness}
\title[Multivariate polynomials as $S_n$-modules]{A combinatorial model for the decomposition of multivariate polynomial rings as $S_n$-modules}
\author[Rosa Orellana]{Rosa Orellana}%
\address{Dartmouth College, Mathematics Department, Hanover, NH 03755, USA} \email{rosa.c.orellana@dartmouth.edu}%
\author[Mike Zabrocki]{Mike Zabrocki}%
\address{Department of Mathematics and Statistics, York University, Toronto, Ontario M3J 1P3,
Canada} \email{zabrocki@mathstat.yorku.ca}%
\date{}  
\thanks{Work supported by NSF grants DMS-1300512 and DMS-1700058, and by NSERC.
}
\begin{document}

\maketitle

\begin{abstract}
We consider the symmetric group $S_n$-module
of the polynomial ring with $m$ sets of $n$ commuting variables and $m'$ sets of $n$ anti-commuting
variables and show that the multiplicity of
an irreducible indexed by the partition $\lambda$ (a partition of $n$)
is the number of multiset tableaux of shape $\lambda$
satisfying certain column and row strict conditions.  We also
present a finite generating set for the ring of
$S_n$ invariant polynomials of this ring.
\end{abstract}

\tableofcontents

\begin{section}{Introduction}

Let $m$ and $n$ be positive integers.
The multivariate polynomial ring of $m$ sets of $n$ commuting variables is a
$GL_n \times GL_m$-module that is familiar in the combinatorial representation
theory literature.  Denote this module by
$$\CC[X_{n \times m}] := \CC[ x_{ij} : 1 \leq i \leq n, 1 \leq j \leq m] $$
then it is well known (e.g. \cite{GoodWall} Theorem 5.6.7) that
the space decomposes as
$$\CC[X_{n \times m}] \simeq
\bigoplus_{\lambda} W_n^\lambda \otimes W_m^\lambda$$
where the direct sum is over all partitions with length less than or equal to $\text{min}(m,n)$
and $W_n^\lambda$ is a polynomial irreducible $GL_n$-module indexed
by the partition $\lambda$.  More precisely, as a $GL_n$-module,
the multiplicity of the irreducible module $W_n^\lambda$ is
equal to the dimension of $W_m^\lambda$. This dimension is equal to the number
of column strict tableaux of shape $\lambda$ and content in the entries
$\{1,2,\ldots, m\}$.  The actions
of $GL_n$ and $GL_m$ commute with each other and this decomposition is a
consequence of the double centralizer theorem.

For a sequence $(a_1, a_2, \ldots, a_m)$ of non-negative integers
the span of the monomials such that, for each $i$ between $1$ and $m$,
the degree in the variables $x_{1i}, x_{2i}, \ldots, x_{ni}$ is equal to $a_i$
is a $GL_n$ submodule of $\CC[X_{n \times m}]$.
This homogeneous submodule has character
$h_{a_1}[X_n] h_{a_2}[X_n] \cdots h_{a_m}[X_n]$
where the $h_r[X_n]$ are the complete homogenous symmetric functions.

The symmetric group $S_n$, realized as permutation matrices,  is a subgroup of $GL_n$
and so this subspace is also a $S_n$-module.  The multiplicity of the irreducible $S_n$-module
indexed by the partition $\lambda$ in this module can be expressed in terms of
plethysm \cite{Lit, ST} of symmetric functions,
\begin{equation}
\label{eq:littlewood}
\left< h_{a_1} h_{a_2} \cdots h_{a_m}, s_\lambda[ 1 + h_1 + h_2 + \cdots] \right>~.
\end{equation}
While there are no general techniques for computing plethysm
multiplicities, it is possible to give a combinatorial
interpretation for this particular expression (e.g. \cite[Theorem 10]{LR}, \cite{LW}).

We extend the module under consideration by looking at polynomial rings in
$m$ sets of commuting variables and $m'$ sets of anticommuting
variables.  That is, let
$$\CC[X_{n \times m}; \Theta_{n \times m'}]
:= \CC[ x_{ij}, \theta_{ij'} : 1 \leq i \leq n, 1 \leq j \leq m, 1 \leq j' \leq m']$$
where the variables $x_{ij}$ commute and commute with the
$\theta_{ij'}$ variables and $\theta_{ij} \theta_{ab} = -\theta_{ab} \theta_{ij}$
if either $i \neq a$ or $j \neq b$, and $\theta_{ij}^2 = 0$.
There is a $GL_n \times GL_m \times GL_{m'}$ action
on this space; however, in this paper we concentrate on the restriction
of the $GL_n$ action to the subgroup of permutation matrices.
This copy of the symmetric group acts on the first indices
of the variables.  We are particularly interested in the decomposition of the
subspaces of fixed homogeneous degree.   In this case, it is also possible
to give an interpretation for the multiplicity of an irreducible
module in terms of plethysm.  However, in this case, we are not aware
of general techniques for finding a combinatorial interpretation for this multiplicity.
The main goal of this paper is to give a combinatorial interpretation for this
multiplicity in terms of tableaux.


In Section 2, we give definitions and introduce the notation used in this paper.  Then, in
Section 3 and Section 4 we present the two main results:
\begin{itemize}
\item  A combinatorial interpretation for the
multiplicity of an irreducible symmetric group
representation in a homogeneous component of $\CC[X_{n \times m}; \Theta_{n \times m'}]$
in terms of certain multiset tableaux (see Theorem \ref{th:main}).
\item  A finite set of algebraic generators for the $S_n$
invariants of $\CC[X_{n \times m}; \Theta_{n \times m'}]$ (see  Theorem \ref{th:gens}).
\end{itemize}

An interesting consequence of the combinatorial interpretation is that
it shows that the symmetric group submodule of
fixed homogeneous degree is representation stable in the sense
defined in \cite{CF, CEF}.

An important application of the main theorem of this paper and the
double centralizer theorem \cite{CR,GoodWall} is
to give an interpretation to the dimensions of the irreducible
representations of the centralizer of ${\mathbb C}S_n$ when it acts
on multivariate polynomial rings (see \cite{NPS,OZ3}).

\subsection*{Acknowledgements}
The authors would like to thank the referees for a careful reading of this paper and thoughtful feedback.
\end{section}

\begin{section}{Notation and Preliminaries} \label{sec:notation}
Let $X_n^{(i)}$ represent a collection of commuting
variables $x_{1i}, x_{2i}, \ldots, x_{ni}$ on which the symmetric group $S_n$ acts
by permutation of the first index.
That is, $\sigma(x_{ri}) = x_{\sigma(r)i}$ for all $\sigma \in S_n$.
The notation $\Theta_n^{(i)}$ will be used to represent a collection
of anti-commuting variables (\defn{Grassmannian variables}) $\theta_{1i}, \theta_{2i}, \ldots, \theta_{ni}$
(again, on which the symmetric group acts on the first index).
Now denote the polynomial ring in $m$ sets of the commuting variables
and $m'$ sets of anti-commuting variables by
$$\CC[X_{n \times m}; \Theta_{n \times m'}] := \CC[ X_{n}^{(1)}, X_n^{(2)}, \ldots, X_n^{(m)}
, \Theta_n^{(1)}, \Theta_n^{(2)}, \ldots, \Theta_n^{(m')}]$$
where the product satisfies the relations
\[
\theta_{ri} \theta_{sj} = - \theta_{sj} \theta_{ri} \hbox{ if } r\neq s\hbox{ or }i \neq j
\qquad\hbox{ and }\qquad
\theta_{ri}^2 =0
\]
\[
x_{rk} x_{sd} = x_{sd} x_{rk}\qquad \hbox{ and }\qquad
x_{rk} \theta_{sj} = \theta_{sj} x_{rk}
\]
for $1 \leq r,s \leq n$, $1 \leq i < j \leq m'$ and $1 \leq k \leq d \leq m$.

A monomial in $\CC[X_{n \times m}; \Theta_{n \times m'}]$ is said to be
of \defn{degree} $\alpha = (\alpha_1, \alpha_2, \ldots, \alpha_m)$ in the
commuting variables and $\beta = (\beta_1, \beta_2, \ldots, \beta_{m'})$
in the Grassmannian variables
if the total degree in the variables $X_n^{(k)}$ is $\alpha_k$ and
the total degree of the monomial in the variables $\Theta_n^{(i)}$
is $\beta_i$ for $1 \leq i \leq m'$ and $1 \leq k \leq m$.
The homogeneous subspace spanned by all monomials of degree
$\alpha = (\alpha_1, \alpha_2, \ldots, \alpha_m)$ in the
commuting variables and $\beta = (\beta_1, \beta_2, \ldots, \beta_{m'})$
in the Grassmannian variables is an $S_n$ submodule of
$\CC[X_{n \times m}; \Theta_{n \times m'}]$.

There is another notation for this symmetric group module that
is worth mentioning in terms of the symmetric tensor $S^r(V)$ and
antisymmetric tensor $\bigwedge^{r'}(V)$.  If $V_n$ is a vector space of dimension $n$
with a basis $\{v_1, v_2, \ldots, v_n\}$.  We note that as $S_n$-modules,
$$\CC[X_{n \times m}; \Theta_{n \times m'}]
\simeq \bigoplus_{r,r' \geq 0} S^r( V_n \otimes V_m ) \otimes {\bigwedge}^{r'}( V_n \otimes V_{m'} )$$
where the symmetric group $S_n$ acts on the vector space $V_n$ in this expression.

\subsection{Combinatorial definitions} \label{subsec:notation}
A \defn{partition} of an integer $n$ is a sequence of positive weakly decreasing
integers whose terms sum to $n$.  The notation $\lambda \vdash n$ denotes
that $\lambda$ is a partition of $n$ and $\ell(\la)$ denotes the
number of terms in the sequence. We use $|\lambda|$ to denote the sum of the terms
of the partition $\lambda$.  The \defn{cells} of a partition $\lambda$
is the set of pairs $\{ (i, j) : 1 \leq i \leq \ell(\lambda), 1 \leq j \leq \lambda_i \}$.
In this paper, the cells will be graphically represented by displaying them in the first
quadrant using French notation with the largest row of the partition on the bottom.

A \defn{multiset} is a collection of objects where the entries are allowed to repeat.
Multisets will be indicated by enclosing the collection of elements with
$\dcl, \dcr$ to indicate that the structure keeps the multiplicity of the
elements.  When the multiset has entries which are integers between $1$ and $m$,
the \defn{content vector} of the multiset will be a vector $(a_1, a_2, \ldots, a_m)$ where
$a_i \geq0$ is the number of times that $i$ appears in the multiset.

A \defn{multiset partition} is a multiset of multisets.
That is, $\pi = \dcl S_1, S_2, \ldots, S_r \dcr$ where
each of the $S_i$ is a multiset.  The entries in $\pi$
are referred to as the \defn{parts} of $\pi$ and the \defn{length} of
$\pi$ is the number of (non-empty) parts of $\pi$.  The
\defn{content} of a multiset partition $\pi$ is the disjoint
union of the entries of $\pi$, that is, $\biguplus_{i=1}^{\ell(\pi)} S_i$.
The multiset partitions that appear in this paper will be
in two different alphabets.  Fix two non-negative integers $m$ and $m'$.
The multiset partitions that are considered here will have entries in
$[m] \cup [\o{m'}] := \{1,2, \ldots, m\} \cup \{\o1, \o2, \ldots, \o{m'}\}$
where barred entries are allowed to occur at most once in each part of the multiset partition
(but may occur in several parts of a given multiset partition).
The notation $\pi \mvdash S$ will be used to indicate
that $\pi$ is a multiset partition with content $S$.
The condition that barred entries are not allowed to occur twice
in any given part of a multiset partition is imposed by the
algebraic relation that the Grassmannian variables square to zero
and that this data structure is used to encode these algebraic objects.
\defn{Throughout this paper it will be assumed that barred entries
may not repeat within a single multiset, a part of a multiset partition,
or within a single cell of a tableau or filling.}

Multisets are, by definition, an unordered structure, but it will be necessary
to specify an order on multisets for constructing multiset tableaux.
The results of this paper are independent of the order we choose; however, the
order chosen will need to be consistent with the algebra.  For instance,
in the proof of \ref{th:main} when multiplying monomials  and in Section \ref{sec:invariants}
the basis elements will inherit a sign from the order.
We assume that our alphabet is totally ordered by $1 < 2 < \ldots < m < \o1 < \o2 < \ldots < \o{m'}$
In this paper,  the multisets will be ordered in \defn{reverse lexicographic order}.
For multisets $S$ and $S'$, we say $S < S'$ if $max(S) < max(S')$, and if
$max(S)=max(S')$ a comparison is made by removing one instance of $max(S)$ from each multiset.
The empty multiset is considered to be the smallest multiset in this order.
(See Examples \ref{ex:tab} and \ref{ex:filling} for an illustration of the use of this multiset order).

Let $\lambda$ be a partition and $S$ be a multiset of entries from $[m] \cup [\o{m'}]$.
A \defn{multiset tableau} of shape $\lambda$ and content $S$ is a map, $T$, from the cells of $\lambda$
to multisets of entries of $[m] \cup [\o{m'}]$ satisfying the following conditions.
\begin{enumerate}
\item For each $c \in \lambda$, $T(c)$ is a multiset (which could be empty)
in barred and unbarred entries, $[m] \cup [\o{m'}]$.
\item The cells are weakly increasing in both the rows and columns with
respect to the chosen order on multisets.
That is, $T(c_1, c_2) \leq T(c_1+1,c_2)$ and $T(c_1,c_2) \leq T(c_1, c_2+1)$ whenever
both cells are in the partition.
\item If a multiset label contains an even number of barred entries,
then no two cells that are labelled with
that multiset may occur in the same column
(i.e. the cells with the same multiset label that have an even number
of barred entries form a horizontal strip).  That is, if $S_i$ is a multiset with an even
number of barred entries and $(c_1, c_2)$, $(c_1',c_2')$ are cells of $\lambda$
such that $T(c_1, c_2) = T(c_1',c_2') = S_i$, then
either $(c_1, c_2) = (c_1',c_2')$ or $c_2 \neq c_2'$.
\item If a multiset contains an odd number of barred entries, then no two cells that are labelled with
that multiset may occur in the same row
(i.e. the cells with the same multiset label that have an
odd number of barred entries form a vertical strip).
That is, if $S_i$ is a multiset with an odd
number of barred entries and $(c_1, c_2)$, $(c_1',c_2')$ are cells of $\lambda$
such that $T(c_1, c_2) = T(c_1',c_2') = S_i$, then
either $(c_1, c_2)=(c_1',c_2')$ or $c_1 \neq c_1'$.
\end{enumerate}

The \defn{content} of a tableau is the multiset union of the content of the cells of the tableau.

\begin{example}\label{ex:tab}
Let $\lambda = (7,3,2,2,1)$, then the following
is an example of a multiset tableau
of content $S = \dcl 1^3, 2^4, \o1^6, \o2^6 \dcr$
satisfying the conditions of the definition.
Since $\dcl \dcr < \dcl 2\dcr < \dcl 1, \o1\dcr < \dcl \o2\dcr < \dcl \o1, \o2\dcr~.$
\squaresize=13pt
\[
\young{1\o1\cr1\o1&\o2\cr1\o1&\o2\cr2&2&\o1\o2\cr&&2&2&\o2&\o1\o2&\o1\o2\cr}
\]
\end{example}

\subsection{Symmetric Functions} The proof of the main result will require the use of well known identities and notation in
symmetric functions.  We will mainly follow the notation which is common to references in
this area \cite{Mac, Sagan, Stanley} with a single
addition that we describe below.  The ring of symmetric functions is the polynomial
algebra in generators $p_i$ (\defn{the power sum generators})
for $i\geq 1$ where the degree $p_i$ is $i$.  That is,
\[
\Lambda = {\mathbb Q}[p_1, p_2, p_3, \ldots ]~.
\]
The \defn{elementary} $\{ e_i \}_{i \geq 1}$ and \defn{homogeneous}
generators $\{ h_i \}_{i \geq 1}$ are related to the power sum generators
by the equations

\[
n e_n = \sum_{r=1}^n (-1)^{r-1} p_r e_{n-r}
\qquad\hbox{ and }\qquad
n h_n = \sum_{r=1}^n p_r h_{n-r}~.
\]
To allow for simpler notation, let $h_0=e_0=p_0=1$ and $h_{-r} = e_{-r} = p_{-r} =0$
for $r>0$.  For an integer vector $\alpha= (\alpha_1, \alpha_2, \ldots, \alpha_{\ell(\alpha)}),$
products of the generators will be represented by the shorthand
\[
p_\alpha:= p_{\alpha_1} p_{\alpha_2} \cdots p_{\alpha_{\ell}},
\qquad
e_\alpha:= e_{\alpha_1} e_{\alpha_2} \cdots e_{\alpha_{\ell}}
\qquad\hbox{ and }\qquad
h_\alpha:= h_{\alpha_1} h_{\alpha_2} \cdots h_{\alpha_{\ell}}~.
\]

For a partition $\lambda$ of $n$, denote the
irreducible character of the representation of the symmetric group $S_n$
indexed by the partition $\lambda$ by $\chi^\lambda$ and the value of this
character at a permutation of cycle type $\mu$ by $\chi^\lambda(\mu)$.
The \defn{Schur symmetric functions} are defined as
\[
s_\lambda = \sum_{\mu} \chi^{\lambda}(\mu) \frac{p_\mu}{z_\mu}
\]
where $z_\mu = \prod_{i\geq1} m_i(\mu)! i^{m_i(\mu)}$ and $m_i(\mu)$
is the number of times that $i$ occurs in the partition $\mu$.

The \defn{Hall scalar product} on symmetric functions is defined
for the power sum basis as
\[
\left< p_\lambda, p_\mu \right> = \begin{cases}z_\mu&\hbox{ if }\lambda=\mu\\
0&\hbox{ otherwise}
\end{cases}~.
\]

There is a combinatorial rule for multiplying an elementary or homogeneous generator
and a Schur function that is known as the \defn{Pieri rule}.  It says
\begin{equation}
h_r s_\lambda = \sum_\mu s_\mu\qquad\hbox{ and }\qquad e_r s_\lambda = \sum_\gamma s_\gamma
\end{equation}
where the sum on the left is over partitions $\mu$ such that $\lambda_i \leq \mu_i$ and for
all cells $(c_1, c_2)$ and $(c_1', c_2')$ in $\mu$ which are not also in $\lambda$,
either $(c_1, c_2) = (c_1', c_2')$ or $c_2 \neq c_2'$.  The sum on the right is over partitions
$\gamma$ such that $\lambda_i \leq \gamma_i$ and for
all cells $(c_1, c_2)$ and $(c_1', c_2')$ in $\gamma$ which are not also in $\lambda$,
either $(c_1, c_2) = (c_1', c_2')$ or $c_1 \neq c_1'$.

Symmetric functions play a role both as generating functions for characters of the symmetric
group, and also as polynomial characters of $GL_n$ representations.
The \defn{character of a $GL_n$ representation} is the trace of the representation
when it is evaluated at a
diagonal matrix with eigenvalues $x_1, x_2, \ldots, x_n$.
It will always be the case that this character, as a function of the eigenvalues,
will be equal to a symmetric
function $f \in \Lambda$ where $f$ is expanded in the power sum generators
and $p_k$ is replaced by $x_1^k + x_2^k + \cdots + x_n^k$.  Denote this character by $f[X_n]$.

The symmetric group may be realized as the subgroup of permutation matrices inside of
$GL_n$.  For each permutation $\sigma$, let $A_\sigma$ represent the corresponding permutation matrix.
To compute the value of the character of an $S_n$ representation
with character equal to $f[X_n]$,
the variables of $f[X_n]$ are replaced by the eigenvalues of $A_\sigma$.  Let $\mu$
be a partition of $n$ and $\sigma$ a permutation of cycle structure $\mu$.  Up to
reordering, the eigenvalues of $A_\sigma$ are dependent only on the cycle structure
of the permutation $\sigma$ (that is, on the partition $\mu$).
We will denote the \defn{evaluation of $f[X_n]$
at the eigenvalues of a permutation matrix} of cycle structure $\mu$ by $f[\Xi_\mu]$
and this is a character value of the $S_n$ representation.

The reason that the computation of the character is important for this problem
is that the symmetric group character characterizes an $S_n$ representation up to isomorphism.
That is, let $X$ be a $GL_n$ representation with character $f[X_n]$ as a function
of the eigenvalues of a permutation matrix.
The \defn{Frobenius image of the character} \cite[Ch I.7, equation (7.2)]{Mac}, \cite[Ch 4.7]{Sagan}
is the generating function
\begin{equation}\label{eq:frobim}
\phi(f[X_n]) = \sum_{\mu \vdash n} f[\Xi_\mu] \frac{p_\mu}{z_\mu}
\end{equation}
and the multiplicity of an irreducible $S_n$ representaton indexed by the partition $\lambda$
in $X$ is equal to the coefficient of $s_\lambda$ in $\phi(f[X_n])$.

\begin{example} Note that $s_2[X_3] = x_1^2+x_2^2+x_3^2 + x_1 x_2 + x_1 x_3 + x_2 x_3$
is the character of the
symmetric group representation corresponding to the module $S^2(V_3)$.
To compute the value of the character at the permutations of cycle type $\mu = (1,1,1)$ with
eigenvalues $\{1,1,1\}$,
cycle type $(2,1)$ with eigenvalues $\{1,-1,1\}$ and cycle type $(3)$ with eigenvalues
$\{ 1, e^{2\pi i/3}, e^{4 \pi i/3} \}$.  The character values are the evaluations:
$$s_2[\Xi_{(1,1,1)}] = 6, \qquad s_2[\Xi_{(2,1)}] = 2,\qquad\hbox{ and }\qquad s_2[\Xi_{(3)}] = 0~.$$

It follows that
\[
\phi(s_2[X_3]) = 6 \frac{p_{111}}{6} + 2 \frac{p_{21}}{2} + 0 \frac{p_3}{3} = 2 s_3 + 2 s_{21}.
\]
This implies that $S^2(V_3)$ decomposes into 4 irreducible $S_n$ components.
\end{example}
\end{section}

\begin{section}{A combinatorial model for the $S_n$-decomposition of
$\CC[ X_{n \times m}; \Theta_{n \times m'}]$}
\label{sec:heart}

One of the main results of this paper is the following combinatorial model for the
decomposition of the multivariate polynomial ring as an $S_n$-module.

\begin{theorem} \label{th:main}
The multiplicity of the symmetric group irreducible indexed by the partition $\lambda \vdash n$
in the subspace of degree $\alpha = (\alpha_1, \alpha_2, \ldots, \alpha_m)$
in the commuting variables and degree $\beta = (\beta_1, \beta_2, \ldots, \beta_{m'})$ in
the Grassmannian variables is equal to the number of
multiset tableaux (see the definition in Section \ref{subsec:notation})
of content $\dcl 1^{\al_1}, 2^{\al_2}, \ldots, m^{\al_m},
\o1^{\be_1}, \ldots, \o{m'}^{\be_{m'}} \dcr$ and of shape $\lambda$.
\end{theorem}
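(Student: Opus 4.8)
The plan is to compute the Frobenius image of the character of the homogeneous subspace and match the coefficient of $s_\lambda$ with the tableau count. The character of the degree-$\alpha$ component in the commuting variables and degree-$\beta$ component in the Grassmannian variables, as a $GL_n$ character, factors as a product over the variable sets: each set $X_n^{(k)}$ of commuting variables in degree $\alpha_k$ contributes $h_{\alpha_k}[X_n]$ (the complete homogeneous symmetric function), and each set $\Theta_n^{(i)}$ of anticommuting variables in degree $\beta_i$ contributes $e_{\beta_i}[X_n]$ (the elementary symmetric function, since a product of distinct $\theta$'s is an antisymmetric tensor, i.e. $\bigwedge^{\beta_i}(V_n)$). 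Thus the character is $h_{\alpha_1}[X_n] \cdots h_{\alpha_m}[X_n] e_{\beta_1}[X_n] \cdots e_{\beta_{m'}}[X_n]$, and by the discussion of the Frobenius image in Section~\ref{sec:notation}, the sought multiplicity is the Hall inner product
\[
\left\langle h_{\alpha_1} \cdots h_{\alpha_m}\, e_{\beta_1} \cdots e_{\beta_{m'}},\ s_\lambda\, \Big[\, \prod_{r\geq0} (1 + h_r \cdots) \,\Big] \right\rangle,
\]
or more directly the coefficient of $s_\lambda$ in the Schur expansion of the product $h_\alpha\, e_\beta$.

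Next I would convert this inner product into a tableau count by iterated application of the Pieri rule stated in the excerpt. The key idea is that multiplying by $h_{\alpha_k}$ adds a horizontal strip of $\alpha_k$ cells (by the left-hand Pieri rule) and multiplying by $e_{\beta_i}$ adds a vertical strip of $\beta_i$ cells (by the right-hand Pieri rule). However, a naive iteration would only produce ordinary column-strict or row-strict fillings, not multiset tableaux. To get the multiset structure, I would instead expand each generator in terms of a \emph{single variable alphabet} and track how the ``letters'' that land in the same cell aggregate into a multiset. The precise bookkeeping is to realize that the product $h_\alpha e_\beta$, when one expands the plethystic substitution $s_\lambda[1 + h_1 + h_2 + \cdots]$ into Schur functions, records in each cell precisely a multiset of labels: an unbarred label $j$ recording which $h_{\alpha_j}$-factor contributed that cell, and a barred label $\overline{i}$ recording which $e_{\beta_i}$-factor contributed it. The even/odd barred-entry conditions (3) and (4) in the definition are exactly the combinatorial shadow of the Pieri rules: a part contributing an even number of barred entries behaves like an $h$ (horizontal strip, distinct columns), and a part contributing an odd number behaves like an $e$ (vertical strip, distinct rows), because commuting a $\theta$ past a $\theta$ produces a sign and an even number of barred entries restores symmetric (horizontal) behavior.

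Concretely, the main technical step is to establish a sign-reversing or weight-preserving bijection between the terms surviving in the expansion of $h_\alpha e_\beta$ expressed in the power-sum or monomial basis and the multiset tableaux of shape $\lambda$ and the prescribed content. I would set this up by choosing the lexicographic order on multisets (as fixed in Section~\ref{sec:notation}) to linearize the construction, building the tableau one part-value at a time in increasing order and invoking the Pieri rule at each stage to insert either a horizontal strip (even number of bars) or a vertical strip (odd number of bars). The weak-increase conditions (2) in rows and columns are forced by the requirement that strips be added to a partition shape, and the no-repeat-in-column/row conditions (3),(4) are forced by the respective Pieri constraints. The main obstacle I anticipate is precisely controlling the anticommuting variables: one must verify that the degree-$\beta_i$ component in each $\Theta_n^{(i)}$ really has character $e_{\beta_i}[X_n]$ (rather than $h_{\beta_i}$ or something with signs), and then verify that the parity of barred entries correctly encodes whether a multiset part came ultimately from an $h$-type or $e$-type generator after the Schur expansion—this parity argument, rather than the strip insertion itself, is where the delicate sign and symmetry reasoning lives, and it is the step I would expect to require the most care.
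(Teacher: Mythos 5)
Your opening is on track---the $GL_n$ character of the subspace is indeed $h_\alpha[X_n]e_\beta[X_n]$, and the multiplicity you want is the coefficient of $s_\lambda$ in the Frobenius image of the restriction of that character to $S_n$---but the phrase ``or more directly the coefficient of $s_\lambda$ in the Schur expansion of the product $h_\alpha e_\beta$'' is a genuine error, not a reformulation. That coefficient computes the $GL_n$-multiplicity of $W_n^\lambda$ for $\lambda\vdash|\alpha|+|\beta|$; the $S_n$-multiplicity lives at $\lambda\vdash n$ and requires evaluating $h_\alpha[X_n]e_\beta[X_n]$ at the eigenvalues of permutation matrices (equivalently, the plethystic inner product you wrote first). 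Already for $m=1$, $m'=0$, $\alpha=(2)$, $n=3$ the two disagree: $h_2=s_2$, while $S^2(V_3)$ restricted to $S_3$ is $2\chi^{(3)}+2\chi^{(2,1)}$. Everything downstream in your plan has to be anchored to the Frobenius image, not to the Schur expansion of $h_\alpha e_\beta$ itself.

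The larger gap is that the step you defer to ``a sign-reversing or weight-preserving bijection'' is the entire content of the proof, and it is not carried out on terms of $h_\alpha e_\beta$. The actual mechanism is: first interpret $h_\alpha[\Xi_\mu]$ and $e_\beta[\Xi_\mu]$ as (signed) fillings of the diagram of the cycle type $\mu$ by multisets, constant along rows (Proposition \ref{prop:grouping}); then group those fillings by the multiset partition $\MSP(F)$ of nonempty cell labels and invoke the identity
\[
\sum_{F:\,\MSP(F)=\pi} wt(F)=\bigl\langle h_{n-|\mt_e(\pi)|-|\mt_o(\pi)|}\, h_{\mt_e(\pi)}\, e_{\mt_o(\pi)},\ p_\mu\bigr\rangle
\]
(Proposition \ref{prop:evalhet}), so that summing over $\mu$ against $p_\mu/z_\mu$ turns the Frobenius image into $\sum_\pi h_{n-|\mt_e(\pi)|-|\mt_o(\pi)|}h_{\mt_e(\pi)}e_{\mt_o(\pi)}$. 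Only at that point does your Pieri-rule endgame apply, and two features are missing from your sketch: the generators are indexed by the \emph{multiplicities} of the distinct parts of $\pi$ (one $h_i$ or $e_i$ per multiset label occurring $i$ times, with the parity of its barred entries deciding $h$ versus $e$), not by $\alpha$ and $\beta$ directly; and the extra factor $h_{n-|\mt_e(\pi)|-|\mt_o(\pi)|}$ supplies the empty cells, without which your tableaux would have shape a partition of $|\alpha|+|\beta|$ rather than of $n$. Your instinct that the barred-parity dichotomy is where the delicate sign bookkeeping lives is correct, but that bookkeeping is done by the two cited propositions, not by the Pieri rule.
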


We present an example of this theorem in Example \ref{ex:mainth} at the
end of this section.  All of the hard combinatorial effort for proving this
theorem appears in two recent references
of the authors \cite{OZ, OZ2}.  By referring the reader to the combinatorial
interpretations in those papers we can present a relatively short proof of this
result, but there is a part which is admittedly not completely self contained.

\begin{remark}
The case of $m=1$ and $m'=0$ or $m=0$ and $m'=1$ is a well known result in the theory of symmetric
functions due to A. C. Aitkin \cite {Ait1, Ait2}
(see \cite{Stanley} p.474--5 exercises {\bf 7.72} and {\bf 7.73}).
The case of $m>0$ and $m'=0$ follows from a result of Littlewood \cite{Lit,ST} and known
techniques for calculating plethysm coefficients.
The multivariate version that we present here is a repeated tensor of Aitkin's results.
What we hope to convey is the surprising fact that the decomposition of this symmetric group module
has a simple description  in terms of `multiset tableaux' and these combinatorial objects specialize to several well
known special cases.
\end{remark}

The following lemmas and propositions
involve finding combinatorial interpretations for algebraic expressions.
In particular, we use combinatorial objects to explain coefficients in
expressions of symmetric functions.
When we began extending our combinatorial
results to explain the decomposition of expressions that are no longer bases of
the symmetric functions, the multiset tableaux that appear in Theorem \ref{th:main}
were a consequence.

Before we prove the theorem we state the following lemma
which is a typical calculation of a computation of
a $GL_n$ character.

\begin{lemma}\label{lem:glnchar} The $GL_n$ character of the subspace of degree
$\alpha = (\alpha_1, \alpha_2, \ldots, \alpha_m)$
in the $x_{ij}$ variables and degree $\beta = (\beta_1, \beta_2, \ldots, \beta_{m'})$ in
the $\theta_{ik}$ Grassmannian variables is equal to $h_\alpha[X_n] e_\beta[X_n]$.
\end{lemma}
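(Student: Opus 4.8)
The plan is to decompose the homogeneous subspace as a tensor product over the $m+m'$ blocks of variables and compute the $GL_n$ character of each block separately. The subspace of degree $\alpha$ in the commuting variables and $\beta$ in the Grassmannian variables factors as a tensor product
\[
\bigotimes_{k=1}^{m} \left( \text{degree } \alpha_k \text{ part of } \CC[X_n^{(k)}] \right)
\otimes
\bigotimes_{i=1}^{m'} \left( \text{degree } \beta_i \text{ part of } \CC[\Theta_n^{(i)}] \right),
\]
and since the character of a tensor product is the product of the characters, it suffices to identify the character of each factor. This mirrors the decomposition $S^r(V_n \otimes V_m) \otimes \bigwedge^{r'}(V_n \otimes V_{m'})$ recorded in Section \ref{sec:notation}.

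First I would handle the commuting blocks. The degree $\alpha_k$ part of the polynomial ring $\CC[X_n^{(k)}]$ in the single set of commuting variables $x_{1k}, \ldots, x_{nk}$ is exactly the symmetric power $S^{\alpha_k}(V_n)$. Its $GL_n$ character, evaluated at a diagonal matrix with eigenvalues $x_1, \ldots, x_n$, is the generating function for monomials of degree $\alpha_k$, which is the complete homogeneous symmetric function $h_{\alpha_k}[X_n]$. Next I would handle the Grassmannian blocks. The degree $\beta_i$ part of the exterior algebra on $\theta_{1i}, \ldots, \theta_{ni}$ is the exterior power $\bigwedge^{\beta_i}(V_n)$, whose basis consists of squarefree monomials $\theta_{r_1 i} \cdots \theta_{r_{\beta_i} i}$ with $r_1 < \cdots < r_{\beta_i}$; the corresponding $GL_n$ character is the generating function for squarefree monomials of degree $\beta_i$, which is precisely the elementary symmetric function $e_{\beta_i}[X_n]$.

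Multiplying these characters across all blocks yields $\prod_{k=1}^m h_{\alpha_k}[X_n] \prod_{i=1}^{m'} e_{\beta_i}[X_n] = h_\alpha[X_n] e_\beta[X_n]$, using the shorthand notation established in Section \ref{sec:notation}. The only point requiring a little care is the interaction of the sign rule for the anticommuting variables with the $GL_n$ action: since $GL_n$ acts only on the first index, the action commutes with the reordering relations among the $\theta$'s, so each factor $\bigwedge^{\beta_i}(V_n)$ is genuinely a $GL_n$-submodule and the tensor factorization of the character is legitimate. This compatibility of the grading with the group action is the main thing to verify; once it is in place, the identification of the individual characters with $h$ and $e$ is a standard computation.
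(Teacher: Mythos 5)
Your proof is correct and is exactly the standard calculation the paper has in mind: the paper omits a proof entirely, describing the lemma as "a typical calculation of a $GL_n$ character," and your argument — factoring the multigraded piece as $\bigotimes_k S^{\alpha_k}(V_n)\otimes\bigotimes_i \bigwedge^{\beta_i}(V_n)$ and using that these have characters $h_{\alpha_k}[X_n]$ and $e_{\beta_i}[X_n]$ — is precisely the routine argument being alluded to (and matches the $S^r(V_n\otimes V_m)\otimes\bigwedge^{r'}(V_n\otimes V_{m'})$ decomposition recorded in Section \ref{sec:notation}). Nothing further is needed.
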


The proof will also require a combinatorial interpretation for the evaluation
of this character at eigenvalues of a permutation matrix of cycle structure
$\mu$ because we will explicitly compute the Frobenius character.  For this
we need the following combinatorial definitions.

\begin{definition} (Definition 33 of \cite{OZ})
Let $\bT_{\alpha,\mu}$ be the set of fillings of
some of the cells of the partition $\mu$ with
multisets such that the total content of the
filling is $\dcl1^{\alpha_1},2^{\alpha_2},\ldots,$ ${\ell(\alpha)}^{\alpha_{\ell(\alpha)}}\dcr$
and  any number of
labels can go into the
same cell but all cells in the same row must have
the same multiset of labels.
\end{definition}

\begin{definition} (Definition 5.13 of \cite{OZ2})
For a non-negative integer vector $\beta$ and
a partition $\mu$ let
$\oT_{\beta,\mu}$ be the fillings of some of the cells
of the diagram of the partition $\mu$ with subsets
of $\{\o1,\o2,\ldots,\o{\ell(\beta)}\}$ such that the total
content of the filling is
$\dcl\o1^{\beta_1},\o2^{\beta_2},\ldots,\o{\ell(\beta)}^{\beta_{\ell(\beta)}}\dcr$
and such that all cells in the same row have the same
subset of entries.  For $F \in \oT_{\beta,\mu}$, we define
the \defn{weight} of $F$, $wt(F)$, to be $-1$ to the power of the number of filled cells
plus the number of rows occupied by the sets of odd size.
\end{definition}

These two combinatorial definitions are used to describe the
following expressions for symmetric group characters
whose characters are given as homogeneous and elementary
symmetric functions.

\begin{prop} (Proposition 27 and Theorem 37 of \cite{OZ};
Proposition 5.6 and Lemma 5.15 of \cite{OZ2}) \label{prop:grouping}
For non-negative integer vectors $\alpha$ and $\beta$ and any
partition $\mu$,
\begin{equation}
h_\alpha[\Xi_\mu] = |\bT_{\alpha,\mu}|
\qquad\hbox{ and }\qquad
e_\beta[\Xi_\mu] = \sum_{F \in \oT_{\beta,\mu}} wt(F)~.
\end{equation}
\end{prop}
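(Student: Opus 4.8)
The plan is to prove each of the two identities by evaluating the relevant symmetric function through its generating function and then matching coefficients against the (signed) fillings; the cited Lemmas of \cite{OZ2} are exactly these two statements, so a self-contained argument amounts to reproving them. The one classical input is the value of a power sum at the eigenvalues of a permutation matrix: since $\sigma^k$ fixes precisely the points lying in cycles whose length divides $k$, one has $p_k[\Xi_\mu]=\sum_{i\,:\,\mu_i\mid k}\mu_i$. Applying the specialization $p_k\mapsto p_k[\Xi_\mu]$ to the identities $\sum_{r\ge0}h_r\,t^r=\exp(\sum_{k\ge1}p_k t^k/k)$ and $\sum_{r\ge0}e_r\,t^r=\exp(\sum_{k\ge1}(-1)^{k-1}p_k t^k/k)$ and telescoping the logarithms yields the closed forms
\[
\sum_{r\ge0}h_r[\Xi_\mu]\,t^r=\prod_{i}\frac{1}{1-t^{\mu_i}}~,\qquad
\sum_{r\ge0}e_r[\Xi_\mu]\,t^r=\prod_{i:\,\mu_i\ \mathrm{odd}}(1+t^{\mu_i})\prod_{i:\,\mu_i\ \mathrm{even}}(1-t^{\mu_i})~.
\]
These two product formulas are the engine for both halves of the proposition.

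For the homogeneous identity I would first note that the content condition decouples the letters. A filling in $\bT_{\alpha,\mu}$ assigns to each row $i$ a multiset placed in all of its $\mu_i$ cells, and a letter $k$ then appears in the total content $\sum_i \mu_i\,(\text{multiplicity of }k\text{ in row }i)$ times; since this is separable in $k$, one gets $|\bT_{\alpha,\mu}|=\prod_k|\{\text{single-letter fillings of content }\alpha_k\}|$. For a single letter the fillings are exactly the weak compositions $(c_i)_i$ with $\sum_i\mu_i c_i=\alpha_k$, whose number is $[t^{\alpha_k}]\prod_i(1-t^{\mu_i})^{-1}=h_{\alpha_k}[\Xi_\mu]$. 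Taking the product over $k$ gives $|\bT_{\alpha,\mu}|=\prod_k h_{\alpha_k}[\Xi_\mu]=h_\alpha[\Xi_\mu]$.

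For the elementary identity this clean factorization is unavailable, since $wt(F)$ couples the barred letters sharing a row through the parity of the whole subset occupying it. I would instead introduce one formal variable $t_k$ per barred letter and show that the full signed generating function factors over the rows of $\mu$:
\[
\sum_{\beta}\Big(\sum_{F\in\oT_{\beta,\mu}}wt(F)\Big)\prod_{k} t_k^{\beta_k}
=\prod_{i}\Big(\sum_{B}(-1)^{(\mu_i+1)\,[\,|B|\ \mathrm{odd}\,]}\prod_{k:\,\o k\in B}t_k^{\mu_i}\Big)~.
\]
A row $i$ occupied by a subset $B$ contributes $\mu_i$ cells and one row to the weight exactly when $|B|$ is odd, giving the local sign above; summing the even- and odd-size subsets of each row separately collapses that row's factor to $\prod_k(1+(-1)^{\mu_i-1}t_k^{\mu_i})$. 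Comparing with the product over $k$ of the $e$-generating function displayed earlier shows the two sides agree coefficientwise, which is precisely $\sum_{F}wt(F)=e_\beta[\Xi_\mu]$.

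The main obstacle is exactly this last sign bookkeeping: verifying that the joint statistic counting cells \emph{and} rows occupied by odd-size subsets is the correct local weight, i.e.\ that its even/odd subset sum reproduces the cross terms forced by $\prod_k e_{\beta_k}[\Xi_\mu]$ when several Grassmannian letters occupy one row of a cycle. A naive weight using all filled cells already fails on $\mu=(1)$ with two barred letters, so the delicate point is pinning down the weight on which the factorization hinges; the generating-function argument above is the cleanest way I know to see why that particular weight makes the identity hold, and it is the substance of Lemma 5.15 of \cite{OZ2}.
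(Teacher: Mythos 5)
Your proof is correct and complete. The paper itself does not prove this proposition --- it is imported verbatim from \cite{OZ} and \cite{OZ2} and used as a black box --- so what you have written is a genuine self-contained replacement for the citation. The two inputs you use are sound: $p_k[\Xi_\mu]=\sum_{i:\,\mu_i\mid k}\mu_i$ follows from the eigenvalues of each $\mu_i$-cycle being the $\mu_i$-th roots of unity, and telescoping the logarithms does give $\sum_r h_r[\Xi_\mu]t^r=\prod_i(1-t^{\mu_i})^{-1}$ and $\sum_r e_r[\Xi_\mu]t^r=\prod_{\mu_i\ \mathrm{odd}}(1+t^{\mu_i})\prod_{\mu_i\ \mathrm{even}}(1-t^{\mu_i})$. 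For the homogeneous half, the letter-by-letter decoupling and the identification of single-letter fillings with solutions of $\sum_i\mu_i c_i=\alpha_k$ is exactly right. For the elementary half, the one step you could make more explicit is why the local row factor $\sum_B(-1)^{(\mu_i+1)[|B|\ \mathrm{odd}]}\prod_{k\in B}t_k^{\mu_i}$ factors over the letters: with $\epsilon=(-1)^{\mu_i-1}=\pm1$ one has $\epsilon^{[|B|\ \mathrm{odd}]}=\epsilon^{|B|}=\prod_{k\in B}\epsilon$, so the sum over subsets is $\prod_k(1+\epsilon\,t_k^{\mu_i})$; this is the sign bookkeeping you correctly flag as the crux, and your check that the naive all-cells weight fails already for $\mu=(1)$ with two barred letters is the right sanity test. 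An alternative route to the same formulas, closer in spirit to how such statements are often derived, is a fixed-point/trace computation: $h_n[\Xi_\mu]$ is the number of degree-$n$ monomials constant on cycles, and $e_n[\Xi_\mu]$ is a signed count of $\sigma$-invariant wedge monomials; your generating-function argument buys a uniform treatment of both cases and makes the multi-letter factorization transparent.
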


Now a product of these expressions will have terms indexed by
elements in $\bT_{\alpha,\mu} \times \oT_{\beta,\mu}$
and combining the objects into multiset fillings of $\mu$ establishes that
$$h_\alpha[\Xi_\mu]e_\beta[\Xi_\mu]$$
is equal to the sum over all fillings of the diagram for $\mu$
with multisets of content
$$\dcl1^{\alpha_1},2^{\alpha_2},\ldots,{\ell(\alpha)}^{\alpha_{\ell(\alpha)}},
\o1^{\beta_1},\o2^{\beta_2},\ldots,\o{\ell(\beta)}^{\beta_{\ell(\beta)}}\dcr$$
where the cells in any given row must have the same label, and
there is a weight of the filling equal to
$-1$ to the power of the number of filled cells
plus the number of rows occupied by the multisets with an odd number of barred entries.

Our theorem follows from one final result that we pull from \cite{OZ2} and state here without
proof.

\begin{prop} (Proposition 5.8 \cite{OZ2})\label{prop:evalhet}
For partitions $\la, \tau$ and $\mu$, let
${\mathcal F}^\mu_{\lambda,\tau}$ be the fillings of the diagram
for the partition $\mu$ with $\la_i$ labels $i$ and
$\tau_j$ labels $j'$ such that all cells in a row are filled with
the same label.  For $F \in {\mathcal F}^\mu_{\lambda,\tau}$,
the weight of the filling, $wt(F)$ is equal to $-1$ raised to the number
of cells filled with primed labels plus the number of rows occupied
by the primed labels, then
\begin{equation}
\left< h_{|\mu|-|\la| - |\tau|} h_\lambda e_\tau, p_\mu \right>
= \sum_{F \in {\mathcal F}^\mu_{\lambda,\tau}}
wt(F)~.
\end{equation}
\end{prop}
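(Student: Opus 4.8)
The plan is to reduce the Hall inner product on the left to a signed sum over ways of distributing the parts of $\mu$ among the factors
$h_{|\mu|-|\la|-|\tau|},\ h_{\la_1},\dots,h_{\la_{\ell(\la)}},\ e_{\tau_1},\dots,e_{\tau_{\ell(\tau)}}$,
and then to recognize each such distribution as a filling $F\in{\mathcal F}^\mu_{\la,\tau}$. The engine is the standard fact that, because each power sum $p_r$ is primitive for the coproduct dual to multiplication ($\Delta p_r = p_r\otimes 1 + 1\otimes p_r$), the inner product of a product of symmetric functions against $p_\mu$ splits along all ordered distributions of the parts of $\mu$:
\[
\langle f_1 f_2 \cdots f_k,\, p_\mu\rangle
= \sum_{\phi}\ \prod_{t=1}^{k}\langle f_t,\, p_{\mu^{(t)}}\rangle,
\]
where $\phi$ ranges over all functions from the parts of $\mu$ (regarded as an indexed list $\mu_1,\dots,\mu_{\ell(\mu)}$) to the factor indices $\{1,\dots,k\}$, and $\mu^{(t)}$ denotes the sub-multiset of parts sent to factor $t$.

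First I would record the two elementary evaluations that make the building blocks collapse: from $h_r=\sum_{\rho\vdash r}z_\rho^{-1}p_\rho$ and $e_r=\sum_{\rho\vdash r}(-1)^{r-\ell(\rho)}z_\rho^{-1}p_\rho$ together with $\langle p_\rho,p_{\rho'}\rangle=z_\rho\,\delta_{\rho\rho'}$, one gets $\langle h_r,p_\rho\rangle=1$ and $\langle e_r,p_\rho\rangle=(-1)^{r-\ell(\rho)}$ whenever $\rho\vdash r$, and $0$ otherwise. Applying the splitting identity to $f=h_{|\mu|-|\la|-|\tau|}\,h_{\la_1}\cdots h_{\la_{\ell(\la)}}\,e_{\tau_1}\cdots e_{\tau_{\ell(\tau)}}$, the only surviving distributions $\phi$ are those for which the parts sent to each factor sum to that factor's subscript: the parts sent to $h_{\la_i}$ sum to $\la_i$, the parts $\rho_j:=\mu^{(t)}$ sent to $e_{\tau_j}$ sum to $\tau_j$, and the parts sent to the leading $h$ sum to $|\mu|-|\la|-|\tau|$. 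Each surviving $\phi$ then contributes $\prod_j(-1)^{\tau_j-\ell(\rho_j)}$ from the $e$-factors and the neutral value $1$ from every $h$-factor.

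Next I would set up the bijection between surviving distributions $\phi$ and fillings $F\in{\mathcal F}^\mu_{\la,\tau}$. A distribution assigns each row of $\mu$ (a part $\mu_i$, viewed as a single monochromatic row) to one factor; sending the leading-$h$ rows to ``unlabelled,'' the $h_{\la_i}$-rows to label $i$, and the $e_{\tau_j}$-rows to label $j'$ produces exactly a filling in which every cell of a labelled row carries one common label, with $\la_i$ cells labelled $i$ and $\tau_j$ cells labelled $j'$, and this correspondence is reversible. It then remains to match signs: the number of cells carrying primed labels is $\sum_j\tau_j=|\tau|$, and the number of rows carrying primed labels is $\sum_j\ell(\rho_j)$, so
\[
wt(F)=(-1)^{|\tau|+\sum_j\ell(\rho_j)}
=(-1)^{\sum_j\left(\tau_j-\ell(\rho_j)\right)}
=\prod_j(-1)^{\tau_j-\ell(\rho_j)},
\]
which is precisely the contribution of $\phi$ computed above; here I use $|\tau|=\sum_j\tau_j$ and that negating an exponent leaves the sign unchanged.

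I expect the main obstacle to be the careful bookkeeping in the bijection and in the sign rather than any deep idea. Two points need attention. Equal parts of $\mu$ and repeated parts of $\la$ (or $\tau$) make several factors and rows ``look alike,'' so I must treat rows and parts as distinguishable indexed objects to be certain that the count of distributions $\phi$ equals the count of fillings $F$ with no hidden multiplicities. And the sign identity hinges on pairing each primed label's cell-count $\tau_j$ against its row-count $\ell(\rho_j)$ correctly: a single misassignment of the leading $h$ factor's sign—which must contribute $+1$, so that unlabelled rows are sign-neutral—would break the match. Once these are pinned down, the identity follows by comparing the two signed counts term by term.
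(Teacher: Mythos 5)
Your proof is correct. One thing to be aware of: this paper does not actually prove Proposition \ref{prop:evalhet} --- it is imported from \cite{OZ2} and explicitly ``stated without proof'' --- so there is no in-paper argument to compare against; your route (splitting $\left<\,\cdot\,, p_\mu\right>$ across the product via the primitivity of the power sums, using $\left<h_r,p_\rho\right>=1$ and $\left<e_r,p_\rho\right>=(-1)^{r-\ell(\rho)}$ for $\rho\vdash r$, and identifying the surviving distributions of the parts of $\mu$ with row-constant fillings) is the standard one and is surely the intended argument. One reading issue you implicitly resolved correctly and should state explicitly: ``all cells in a row are filled with the same label'' must mean that each row of $\mu$ is either entirely labelled with a single symbol or entirely empty (consistent with Definitions 33 and 5.13 quoted earlier in the paper). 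Under the weaker reading in which only the filled cells of a row must agree, the identity already fails for $\mu=(2)$, $\lambda=\emptyset$, $\tau=(1)$, where the left side is $\left<h_1e_1,p_2\right>=0$ but a partially filled row would contribute a nonzero term. With that reading fixed, your bijection is genuinely one-to-one even in the presence of repeated parts of $\mu$, $\lambda$, or $\tau$ (rows and factors are indexed objects), and the sign match $(-1)^{|\tau|+\sum_j\ell(\rho_j)}=\prod_j(-1)^{\tau_j-\ell(\rho_j)}$, together with the neutrality of every $h$-factor including the leading one, completes the argument.
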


This last proposition indicates that we should assign a `type'
to each filling described above and group the fillings with the same type together.

\begin{definition}
Let $F$ be a filling of the diagram for $\mu$ with multisets such that the multiset
union of all of the labels is of content
$$cont_{\alpha,\beta} := \dcl1^{\alpha_1},2^{\alpha_2},\ldots,{\ell(\alpha)}^{\alpha_{\ell(\alpha)}},
\o1^{\beta_1},\o2^{\beta_2},\ldots,\o{\ell(\beta)}^{\beta_{\ell(\beta)}}\dcr~.$$
We associate the filling to a multiset partition, $\MSP(F)$, which is equal to the
multiset collection of the non-empty labels of the cells.

For a given multiset partition, define $\mt_e(\pi)$ to be the partition
whose entries are the multiplicities of the parts of $\pi$ that have an even number of
barred entries and $\mt_o(\pi)$ be the partition whose entries are the
multiplicities of the parts of $\pi$ that have an odd number of
barred entries.
\end{definition}

\begin{example} \label{ex:filling}
Let $n=24$ and $\mu = (5,5,3,2,2,2,2,1,1,1)$ and consider the filling $F$,
\squaresize=16pt
$$\young{112\cr22\cr\cr1\o1\o2&1\o1\o2\cr\o1&\o1\cr1\o1&1\o1\cr
\o1&\o1\cr&&\cr13&13&13&13&13\cr&&&&\cr}$$
  The content of this filling is $\dcl1^{11},2^3,3^5,\o1^8,\o2^2 \dcr$
and $\MSP(F) = \dcl\dcl1, 1, 2\dcr,$ $\dcl2,2\dcr,$
$\dcl1,3\dcr,$ $\dcl1,3\dcr,$ $\dcl1,3\dcr,$ $\dcl1, 3\dcr,$ $\dcl1, 3\dcr,$
$\dcl\o1\dcr,$ $\dcl\o1\dcr, \dcl\o1\dcr,$
$\dcl\o1\dcr,$
$\dcl1,\o1\dcr,$ $\dcl1,\o1\dcr,$
$\dcl1,\o1,\o2\dcr,$ $\dcl1,\o1,\o2\dcr \dcr$ where (while a multiset is by definition an unordered structure) we
have listed the entries by increasing reverse lexicographic order to be consistent with the
order that we will use on tableaux.
There are $6$ cells which have labels with an odd number of barred entries and they
occupy 3 rows so the weight of this filling is $-1$.
If we let $\pi = \MSP(F)$, then $\mt_e(\pi) = (5,2,1,1)$ and
$\mt_o(\pi) = (4,2)$.
\end{example}

\begin{proof}(of Theorem 3.1)
The $GL_n$ character of the subspace of degree $\alpha = (\alpha_1, \alpha_2, \ldots, \alpha_m)$
in the $x_{ij}$ variables and degree $\beta = (\beta_1, \beta_2, \ldots, \beta_{m'})$ in
the $\theta_{ik}$ Grassmannian variables is
equal to $h_\alpha[X_n] e_\beta[X_n]$ by Lemma \ref{lem:glnchar}.  We will
use the evaluation of this character at elements of the symmetric group as a subset of
elements of $GL_n$.

With the $GL_n$ character, we can compute the $S_n$ character by evaluating
$h_\alpha[X_n] e_\beta[X_n]$ at the eigenvalues of a permutation matrix.
Fix a partition $\mu$ of $n$ and we will calculate, using the combinatorial
gadgets, the character of the subspace at a permutation matrix of cycle structure $\mu$.
Proposition \ref{prop:grouping} implies that $h_\alpha[\Xi_\mu] e_\beta[\Xi_\mu]$
is equal to a sum over
fillings of the diagram for $\mu$
with multisets of content
$cont_{\alpha,\beta}$.
We then group all fillings by the associated multiset partition to the filling,
$\MSP(F)$, hence
$$h_\alpha[\Xi_\mu] e_\beta[\Xi_\mu] =
\sum_{\pi} \sum_{F : \MSP(F) = \pi} wt(F)$$
where the outer sum is over all multiset partitions $\pi$ of
such that $\pi \mvdash \dcl 1^{\alpha_1}, 2^{\alpha_2}, \ldots, m^{\alpha_m},$
$\o1^{\beta_1}, \o2^{\beta_2}, \ldots, \o{m'}^{\beta_{m'}}\dcr$.

We next apply Proposition \ref{prop:evalhet} and consider the labels of the filling
of multisets where the `primed' entries of the filling
are those with an odd number of barred entries and the `unprimed' entries of the
filling are those with an even number of barred entries.  This implies that
$$h_\alpha[\Xi_\mu] e_\beta[\Xi_\mu] =
\sum_{\pi} \left< h_{|\mu|-|\mt_e(\pi)|
-|\mt_o(\pi)|} h_{\mt_e(\pi)} e_{\mt_o(\pi)}, p_\mu \right>$$
where again the sum is over all multiset partitions $\pi$ of
content $cont_{\alpha,\beta}$.

Since we have computed the character of the subspace for each permutation of
cycle structure $\mu$, from Equation \eqref{eq:frobim} we can compute the Frobenius image
of the character of the subspace as
\begin{align}
\sum_{\mu \vdash n} h_\alpha[\Xi_\mu] e_\beta[\Xi_\mu] \frac{p_\mu}{z_\mu} &=
\sum_{\pi} \sum_{\mu \vdash n}
\left< h_{n-|\mt_e(\pi)|
-|\mt_o(\pi)|} h_{\mt_e(\pi)} e_{\mt_o(\pi)}, p_\mu \right>\frac{p_\mu}{z_\mu}\nonumber\\
&= \sum_{\pi} h_{n-|\mt_e(\pi)|
-|\mt_o(\pi)|} h_{\mt_e(\pi)} e_{\mt_o(\pi)}
\label{eq:hecomb}
\end{align}
where the sum is over all multiset partitions $\pi$ of
content $cont_{\alpha,\beta}$.

To conclude the proof of the theorem we need  to establish that the
multiplicity of a Schur function indexed by a partition $\lambda$ in this expression
agrees with the description stated in the theorem.  Here is where
the order of the multisets in the tableau plays a role in determining
the combinatorial interpretation.  Each of the generators in the
product $h_{n-|\mt_e(\pi)|
-|\mt_o(\pi)|} h_{\mt_e(\pi)} e_{\mt_o(\pi)}$ will represent the cells in
the multiset tableau which are labeled by a fixed multiset.  The generator $h_{n-|\mt_e(\pi)|
-|\mt_o(\pi)|}$ represents the blank cells, and those with an even number of barred entries are
represented by the generators in the product $h_{\mt_e(\pi)}$,
while those with an odd number of barred entries are
represented by the generators in the product $e_{\mt_o(\pi)}$.
Since the multiplication in the
ring of symmetric functions is commutative, we can choose to order these terms
with respect to the total order that we have placed on these multisets.

To determine the multiplicity of the Schur function $s_\lambda$ in this expression,
we repeatedly apply the Pieri rule.  We use tableaux to keep track of the terms in the
Schur expansion of
$$h_{n-|\mt_e(\pi)|
-|\mt_o(\pi)|} h_{\mt_e(\pi)} e_{\mt_o(\pi)},$$
where the labels of the tableaux are
the multisets represented by the $h$ or $e$-generators.
The Pieri rule implies we will record $i$ cells
in a horizontal strip for each product of an $h_i$ generator, while we will
record $i$ cells in a vertical strip for each $e_i$ generator.

We provide an example below to ensure that it is clear that
the coefficient of a Schur function $s_\lambda$ in $h_{n-|\mt_e(\pi)|
-|\mt_o(\pi)|} h_{\mt_e(\pi)} e_{\mt_o(\pi)}$ is equal to the number
of multiset tableaux whose entries are the multisets of $\pi$.  By equation \eqref{eq:hecomb},
the coefficient of $s_\lambda$ in the Frobenius image of the subspace has
multiplicity equal to the total number of multiset tableaux of shape $\lambda$
and content $cont_{\alpha,\beta}$.
\end{proof}

\begin{example} \label{ex:mainth}
Let
$\pi = \dcl\dcl1, 1, 2\dcr,$ $\dcl2,2\dcr,$
$\dcl1,3\dcr,$ $\dcl1,3\dcr,$ $\dcl1,3\dcr,$ $\dcl1, 3\dcr,$ $\dcl1, 3\dcr,$
$\dcl\o1\dcr,$ $\dcl\o1\dcr,$ $\dcl\o1\dcr,$
$\dcl\o1\dcr,$
$\dcl1,\o1\dcr,$ $\dcl1,\o1\dcr,$
$\dcl1,\o1,\o2\dcr,$ $\dcl1,\o1,\o2\dcr \dcr$
where we have $\mt_{e}(\pi) = (5,2,1,1)$ and $\mt_{o}(\pi) = (4,2)$.
This implies that $h_{24-9-6} h_{(5,2,1,1)} e_{(4,2)}$ will occur as
a summand in Equation \eqref{eq:hecomb}.

For example, to compute the tableaux with entries in
$\pi$ and of shape $\lambda = (10, 8, 5, 1)$,
we first order the generators with respect to the order on
multisets mentioned in Section \ref{subsec:notation},
$\dcl\dcr < \dcl1,1,2\dcr
< \dcl2,2\dcr
< \dcl1,3\dcr
< \dcl\o1\dcr
< \dcl1,\o1\dcr
< \dcl1,\o1,\o2\dcr.$
The number of multiset tableaux with these entries
will be the coefficient of the Schur function
in the symmetric function $h_9 h_1 h_1 h_5 e_4 e_2 h_2$.
The three tableaux with this filling are:
\squaresize=12pt
\def\tone{{\hbox{\tiny 1}}}
\def\ttwo{{\hbox{\tiny 2}}}
\def\oth{{\hbox{\tiny 13}}}
\def\rrr{{\tone\o\tone\o\ttwo}}
$$\young{\o\tone\cr\hbox{\tiny 22}&\o\tone&\tone\o\tone&\rrr&\rrr\cr\hbox{\tiny 112}&\oth&\oth&\oth&\oth&\oth&\o\tone&\tone\o\tone\cr&&&&&&&&&\o\tone\cr}
\hskip .3in
\young{\o\tone\cr\oth&\oth&\o\tone&\tone\o\tone&\rrr\cr
\hbox{\tiny 112}&\hbox{\tiny 22}&\oth&\oth&\oth&\o\tone&\tone\o\tone&\rrr\cr
&&&&&&&&&\o\tone\cr}
\hskip .3in
\young{\o\tone\cr\oth&\o\tone&\tone\o\tone&\rrr&\rrr\cr
\hbox{\tiny 112}&\hbox{\tiny 22}&\oth&\oth&\oth&\oth&\o\tone&\tone\o\tone\cr
&&&&&&&&&\o\tone\cr}~.
$$\end{example}
\end{section}

\begin{section}{The ring of $S_n$-invariants of $\CC[ X_{n \times m}; \Theta_{n \times m'}]$}
\label{sec:invariants}

Since the multiplicity of an irreducible representation
indexed by a partition $\lambda$ in the $S_n$-module
of $\CC[ X_{n \times m}; \Theta_{n \times m'}]$ is equal to the number of
multiset tableaux of shape $\lambda$,
then the multiplicity of $S_n$ invariants (the irreducible indexed by $(n)$) is
equal to the number of single row multiset tableaux.
Single row multiset tableaux are in bijection with multiset partitions
as defined in Section \ref{sec:notation} with one additional condition
imposed by the construction on tableau.

Say that $\pi$ is a \defn{super multiset partition} of $[m] \cup [\o{m'}]$ if
the parts with an odd number
of barred entries appear at most once in the multiset partition.

\begin{cor} \label{cor:invariants}
A basis for the ring of $S_n$ invariants of $\CC[X_{n \times m}; \Theta_{n \times m'}]$
is indexed by super multiset partitions of $[m] \cup [\o{m'}]$
of length less than or equal to $n$.
\end{cor}

The following special cases of these rings of invariants are
examples that we are aware of that are considered in the
algebraic combinatorics literature.

\begin{itemize}
\item If $m = 1$ and $m'=0$, then the ring of invariants
of $\CC[X_n]$ are known as symmetric polynomials and are equal to
the span of the polynomials
$Sym_n := \bigoplus_{k \geq 0}\{ p_\lambda[X_n] : \lambda \vdash k \}~.$
A basis for the ring of invariants in this case is indexed by partitions
which have length of $\lambda \leq n$.

A well known result \cite{C} states that as an $S_n$-module,
$$\CC[X_n] \simeq Sym_n \otimes \CC[X_n]/I$$
where $I$ is the ideal $\left< p_k[X_n] : 1 \leq k \leq n \right>$ and this is
equal to the ideal generated by symmetric polynomials with non-constant term.
The quotient $\CC[X_n]/I$ are often referred to as the \defn{coinvariants}
and the inverse system corresponding to that quotient
are the \defn{harmonics}.

\item If $m=2$ and $m'=0$, the quotient of $\CC[X_{n \times 2}]$
by the ideal generated by the invariants of the ring
was defined by Haiman and is known as the ring of
\defn{diagonal coinvariants} \cite{Hai94}.
A combinatorial formula for the monomial expansion of the
Frobenius characteristic of this $S_n$-module was known
as \defn{the shuffle conjecture} \cite{HHLRU, CM15}.

\item For $m>2$ and $m'=0$, F. Bergeron and L.-F.\,Pr\'eville-Ratelle \cite{Ber,BPR}
considered quotients and harmonics in multivariate polynomial
spaces and their general linear group and symmetric group characters.

\item If $m=2$ and $m'=1$, then the second author \cite{Zab19} recently
proposed the quotient $\CC[X_{n \times 2}; \Theta_n]$
by the ideal generated by the invariants as a representation
theoretic model for a generalization of the shuffle conjecture
known as \defn{the delta conjecture} \cite{HRW}.

\item If $m > 1$ and $m'=0$, then the ring of invariants
of $\CC[X_{n \times m}]$ is known as \defn{MacMahon symmetric functions}
\cite{Ges, Rosas} (MacMahon \cite{MacMahon} called these \defn{symmetric
functions in several systems of parameters}).  MacMahon indexed
the basis of this space of symmetric functions by vector partitions
instead of multiset partitions.

\item If $m=0$ and $m'=2$, then Kim and Rhoades \cite{KR} give the standard
monomial basis of the quotient of $\CC[\Theta_{n \times 2}]$ by the ideal
generated by the invariants of the ring.  They (indirectly) used the
same set of generators we define in this section for this special case.

\item If $m=1$ and $m'=1$, then the ring of invariants are known
as \defn{symmetric functions in superspace} and was studied by
Desrosiers, Lapointe and Mathieu \cite{DLM}.  There the invariants
are indexed by objects called \defn{superpartitions}, which are pairs of the
form $(\Lambda^a; \Lambda^s)$,
where $\Lambda^a$ is a strict partition and $\Lambda^s$ is a partition.
L. Solomon \cite{Sol} proved that if $\{f_1, f_2, \ldots, f_n\}$
is a set of free generators for $\CC[X_n]$, then
$\{ f_1, f_2, \ldots, f_n, d(f_1), d(f_2), \ldots, d(f_n)\}$
is a set of free
generators for the algebra of invariants of $\CC[X_n; \Theta_n]$,
where $d : \CC[X_n] \rightarrow \CC[X_n; \Theta_n]$ is an operator
on polynomials defined by $d(f) = \sum_{i=1}^n \partial_{x_i}f \theta_i $.

\item If $m=1$ and $m'\geq2$, then the ring of invariants was
studied by Alarie-V\'ezina, Lapointe and Mathieu \cite{ALM}.
In that work, the invariants are indexed by generalizations
of superpartitions.
\end{itemize}

The `super' prefix of the name super multiset partition
was borrowed from the references \cite{DLM,ALM} mentioned above,
The main result of the rest of this section is to establish
a finite list of algebraic generators for the ring of $S_n$ invariants
of $\CC[ X_{n \times m}; \Theta_{n \times m'}]$ (the analogue of the power
sums) in Theorem \ref{th:gens}.

Analogues of the elementary and
complete homogeneous generators exist and we will hint, but not explicitly
state, how to define them in terms of generating functions.  The generators
of this ring are not `free' because they will satisfy relations
coming from the Grassmannian variables.

Let $\pi = \dcl S_1, S_2, \ldots, S_{\ell(\pi)} \dcr$
be a multiset partition of length less than or equal to $n$ whose entries are in $[m] \cup [\o{m'}]$.
Furthermore let us assume that
the parts of the multisets $S_i$ are ordered in weakly increasing order
and $S_i = \dcl 1^{a_{i1}}, 2^{a_{i2}}, \ldots, m^{a_{im}},
\o{s}_{i1}, \o{s}_{i2}, \ldots, \o{s}_{i\ell_i} \dcr$.
The \defn{monomial symmetric polynomial} indexed by $\pi$ is denoted
$m_\pi$ and it is defined as the polynomial in $\CC[X_{n \times m}; \Theta_{n \times m'}]$
that equals the sum of the distinct $S_n$ orbits of
$$(X;\Theta)^{\pi} := \prod_{i=1}^{\ell(\pi)} x_{i1}^{a_{i1}} x_{i2}^{a_{i2}} \cdots
x_{im}^{a_{im}} \theta_{is_{i1}} \theta_{is_{i2}}\cdots\theta_{is_{i\ell_i}}$$
where the product follows the order on the multiset partition $\pi$ and the entries in the multiset
are in increasing order. Recall that $S_n$ acts on the first indices of the variables $x_{ij}$ and $\theta_{ij}$.
If $\ell(\pi)>n$ then the monomial symmetric polynomial is $0$.

Looking carefully at this set of symmetric group invariants, we notice that
if $S_i = S_{i+1}$ and $\ell_i$ is odd (there are an odd number of barred elements in $S_i$),
then $\sigma (i, i+1) (X;\Theta)^{\pi}=
- \sigma (X;\Theta)^{\pi}$, for $\sigma \in S_n$.  As a consequence, $m_\pi = 0$ whenever
 $\pi$ contains two equal parts with an odd number
of barred entries.  Therefore the indexing set for the monomial basis
are the super multiset partitions.

Let $S$ be a multiset and let $S = T \cup \o{T}$ where
$T = \dcl 1^{a_1}, 2^{a_2}, \ldots, m^{a_m} \dcr$ is a multiset
with entries in $[m]$, and
$\o{T} = \{ \o{s}_1, \o{s}_2, \ldots, \o{s}_k\}$ is a subset of $[\o{m'}]$.
The \defn{power sum generators} are defined by
\begin{equation}\label{eq:powergen}
p_S := \sum_{r=1}^n x_{r1}^{a_1} x_{r2}^{a_2} \cdots x_{rm}^{a_m}
\theta_{r{s}_1} \theta_{r{s}_2} \cdots \theta_{r{s}_k}~.
\end{equation}

For a multiset partition $\pi = \dcl S_1, S_2, \ldots, S_{\ell(\pi)} \dcr$,
the \defn{power symmetric polynomials} are
$$p_\pi := p_{S_1} p_{S_2} \cdots p_{S_{\ell(\pi)}}$$
where the product of the generators are in increasing order with respect
to reverse lexicographic order.

\begin{example}
Let $\pi = \dcl \dcl 1,1,\o1 \dcr, \dcl 1,1,\o2\dcr, \dcl\o1,\o2\dcr, \dcl\o1,\o2\dcr\dcr$.
As long as $n \geq 4$,
$$m_\pi = \sum_{(a,b,c,d)} x_{a1}^2 \theta_{a1} x_{b1}^2 \theta_{b2} \theta_{c1} \theta_{c2} \theta_{d1}\theta_{d2}$$
where the sum is over all sequences $(a,b,c,d)$ of distinct entries with $c<d$ (since
$\theta_{c1} \theta_{c2} \theta_{d1}\theta_{d2} = \theta_{d1} \theta_{d2} \theta_{c1}\theta_{c2}$).
The following power sum generators are
\begin{align*}
p_{\dcl1,1,\o1\dcr} &= x_{11}^2 \theta_{11} + x_{21}^2 \theta_{21} + \cdots + x_{n1}^2 \theta_{n1}\\
p_{\dcl1,1,\o2\dcr} &= x_{11}^2 \theta_{12} + x_{21}^2 \theta_{22} + \cdots + x_{n1}^2 \theta_{n2}\\
p_{\dcl\o1,\o2\dcr} &= \theta_{11} \theta_{12} + \theta_{21} \theta_{22} + \cdots + \theta_{n1} \theta_{n2}~.
\end{align*}
Some explicit expansion of the products of the power sum generators shows
that
$$p_{\dcl1,1,\o1\dcr}p_{\dcl1,1,\o2\dcr}p_{\dcl\o1,\o2\dcr}^2 = m_{\dcl \dcl 1,1,\o1 \dcr, \dcl 1,1,\o2\dcr, \dcl\o1,\o2\dcr, \dcl\o1,\o2\dcr\dcr} + m_{\dcl\dcl1,1,1,1,\o1,\o2\dcr,\dcl\o1, \o2\dcr, \dcl\o1, \o2\dcr\dcr}~.$$
\end{example}

\begin{lemma} \label{lem:span} The power symmetric polynomials $\{ p_\pi \}$ are a basis for the
ring of $S_n$ invariants of $\CC[X_{n \times m}; \Theta_{n \times m'}]$
where the $\pi$ run over all super multiset partitions with $\ell(\pi) \leq n$.
\end{lemma}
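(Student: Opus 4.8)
The plan is to show that the expansion of each power symmetric polynomial $p_\pi$ in the monomial symmetric polynomials $\{m_\pi\}$ is triangular with an invertible diagonal, so that spanning follows from the fact (established in Corollary~\ref{cor:invariants} and the discussion after it) that $\{m_\pi : \pi \text{ a super multiset partition}, \ell(\pi)\leq n\}$ is a basis of the invariant ring. Since the ring of invariants decomposes as a direct sum over the multidegree (equivalently, over the total content $S$), it suffices to fix a content $S$ and work inside the finite-dimensional space spanned by $\{m_\pi : \pi \mvdash S\}$. On the super multiset partitions of content $S$ I would introduce the coarsening partial order: write $\pi \preceq \rho$ when $\rho$ is obtained from $\pi$ by repeatedly replacing two parts $S_i, S_j$ by their multiset union $S_i \uplus S_j$. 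This order is graded by $\ell(\pi)$, which strictly decreases as one passes to a coarser partition.

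First I would expand the product $p_\pi = p_{S_1}\cdots p_{S_{\ell(\pi)}}$ directly from the definition in~\eqref{eq:powergen}, obtaining a sum over all functions $i\mapsto r_i\in\{1,\dots,n\}$ assigning a first index $r_i$ to each part $S_i$, and group the monomials by the set partition of $\{1,\dots,\ell(\pi)\}$ recording which indices $r_i$ coincide. For an injective assignment (all $r_i$ distinct) the commuting factors multiply without interaction and the Grassmannian blocks sit at distinct first indices, so the monomial obtained is exactly a term of the orbit sum $(X;\Theta)^{\pi}$; collecting these yields $c_\pi\, m_\pi$, where $c_\pi$ counts the injective assignments producing a common monomial.

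The key point is that $c_\pi\neq0$. Two injective assignments give the same monomial precisely when they differ by permuting the index slots of equal parts, and transposing two equal Grassmannian blocks with $k$ barred entries each contributes the sign $(-1)^{k^2}=(-1)^{k}$. Because $\pi$ is a super multiset partition, parts with an odd number of barred entries occur at most once, so every pair of equal parts has an even number of barred entries and contributes sign $+1$; hence $c_\pi=\prod_T(\text{multiplicity of the part }T\text{ in }\pi)!$, a positive integer. For a non-injective assignment, the parts whose indices coincide have their commuting exponents added and their Grassmannian blocks concatenated at a common first index: if two such parts share a barred entry the monomial vanishes since $\theta_{ri}^2=0$, and otherwise it is a term of $(X;\Theta)^{\rho}$ for the strictly coarser partition $\rho\succ\pi$ obtained by merging exactly those parts (and if such a $\rho$ fails the super condition, its orbit sum is identically zero and those monomials cancel). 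Therefore
\[
p_\pi = c_\pi\, m_\pi + \sum_{\rho\succ\pi} d_{\pi,\rho}\, m_\rho,\qquad c_\pi\neq 0~.
\]

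Because this expansion is triangular with respect to the coarsening order and has nonzero diagonal entries $c_\pi$, the finite matrix expressing the $p_\pi$ in the basis $\{m_\pi\}$ of content $S$ is invertible; inverting it writes each $m_\pi$ as a linear combination of the $p_\rho$ with $\rho\succeq\pi$, so the $\{p_\pi\}$ span the ring of invariants. The main obstacle is the Grassmannian sign bookkeeping in the two places where it matters: verifying that the diagonal coefficient $c_\pi$ survives (guaranteed precisely by the super condition, which forbids repeated parts with an odd number of barred entries), and confirming that the off-diagonal monomials, after the $\theta^2=0$ vanishing, are genuinely terms of coarser orbit sums rather than spurious cross terms.
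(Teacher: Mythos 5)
Your proof is correct and takes essentially the same route as the paper: both arguments reduce to the monomial basis and then show $p_\pi = c_\pi m_\pi$ plus terms supported on strictly coarser multiset partitions (the paper phrases the triangularity via an order on monomials by the number of distinct first indices, which for a term of $m_\rho$ is just $\ell(\rho)$, so the two orderings agree), concluding by invertibility of the triangular transition matrix. Your explicit check that the super condition forces the diagonal coefficient $c_\pi=\prod_T(\text{mult. of }T)!$ to be nonzero is a sign-bookkeeping detail the paper leaves implicit, and it is the right thing to verify.
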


\begin{proof}
The ring of $S_n$ invariants of $\CC[X_{n \times m}; \Theta_{n \times m'}]$
are clearly spanned by the monomial symmetric polynomials since they are the
$S_n$ orbits of a single monomial in the polynomial ring.  This basis
is indexed by the super multiset partitions with length less than or equal to $n$.

Consider the power sum symmetric function indexed by a super multiset partition
and order the monomials using reverse lexicographic order where the Grassmannian variables
are larger than the commutative variables.  It follows
that $p_\pi = c_\pi m_\pi$ plus terms which are smaller with respect to this order.
Therefore the set $\{ p_\pi \}$ with
$\ell(\pi) \leq n$ also spans the same space and is linearly independent.
\end{proof}

Let ${\bf q} = q_1, q_2, \ldots, q_m$ be a commuting set of variables
and ${\bf z} = z_1, z_2, \ldots, z_{m'}$ be an anticommuting set
of variables.  Define the following generating functions for
the generators
\begin{equation}\label{eq:elemgf}
E({\bf q}, {\bf z}) = \prod_{i=1}^n \left(1 + \sum_{j=1}^m q_j x_{ij}
+ \sum_{j'=1}^{m'} z_{j'} \theta_{ij'}\right)\hbox{ and }
\end{equation}
\begin{equation}\label{eq:powergf}
P({\bf q}, {\bf z}) =
-\sum_{i=1}^n \log\left( 1 - \left(\sum_{j=1}^m q_j x_{ij}
+ \sum_{j'=1}^{m'} z_{j'} \theta_{ij'}\right)\right)~.
\end{equation}
It is easily checked that these generating functions are related by
$E({\bf q}, {\bf z}) = \exp\left( -P(-{\bf q}, -{\bf z})\right)$.

The following lemma involves a standard calculation on the generating
function using the definitions in Equation \eqref{eq:powergen}
and the expansions of the expression in Equation \eqref{eq:powergf}.
The variables satisfy $z_{j'}^2 = \theta_{ij'}^2 = 0$ and this imposes the condition
that the barred entries may not be repeated in the multiset.
There is no sign introduced in the expression
from the $\theta_{ij'}$ variables because it is cancelled by the
sign from the $z_{j'}$ variables.
\begin{lemma}
Let $S = T \cup \o{T}$ where
$T = \dcl 1^{a_1}, 2^{a_2}, \ldots, m^{a_m} \dcr$ is a multiset
with entries in $[m]$, and
$\o{T} = \{ \o{s}_1, \o{s}_2, \ldots, \o{s}_k\}$ is a subset of $[\o{m'}]$.
The coefficient of $q_1^{a_1} q_2^{a_2} \cdots q_m^{a_m} z_{s_1} z_{s_2} \cdots z_{s_k}$
in $P({\bf q}, {\bf z})$ is equal to $\frac{1}{|S|} \pchoose{|S|}{a_1, a_2, \ldots, a_m, 1^k}p_S$.
\end{lemma}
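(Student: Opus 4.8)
The plan is to read off the stated coefficient directly from the series expansion of the logarithmic generating function \eqref{eq:powergf}. Writing $u_i := \sum_{j=1}^m q_j x_{ij} + \sum_{j'=1}^{m'} z_{j'}\theta_{ij'}$, the definition gives $P({\bf q},{\bf z}) = \sum_{i=1}^n \left(-\log(1-u_i)\right) = \sum_{i=1}^n \sum_{r \geq 1} \tfrac{1}{r} u_i^r$. Since $P$ is a sum over $i$ and the coefficient of a monomial in the ${\bf q},{\bf z}$ variables is extracted term by term, it suffices to compute the coefficient inside a single $-\log(1-u_i)$, obtaining a polynomial in the $i$-th row variables $x_{ij},\theta_{ij'}$, and then to sum over $i$; this final summation is exactly the sum defining $p_S$ in \eqref{eq:powergen}. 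I would first note that each summand of $u_i$ has combined degree one in the ${\bf q},{\bf z}$ variables, so $u_i^r$ is homogeneous of degree $r$ in them; as the target monomial $q_1^{a_1}\cdots q_m^{a_m} z_{s_1}\cdots z_{s_k}$ has degree $|S| = a_1+\cdots+a_m+k$, only $r=|S|$ contributes, which produces the prefactor $\tfrac{1}{|S|}$.

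The central point, and the one requiring care, is the commutativity and sign bookkeeping among the summands of $u_i$, which is the content of the remark preceding the lemma. The terms $q_j x_{ij}$ are even and commute with everything. Each pair $z_{j'}\theta_{ij'}$ is a product of two anticommuting generators, and I would verify that any two such pairs commute: interchanging $z_{j'}\theta_{ij'}$ with $z_{l'}\theta_{il'}$ produces one sign from transposing the two $z$ variables and an identical sign from transposing the two $\theta$ variables, and these cancel. Consequently every summand of $u_i$ commutes with every other one, and $u_i^{|S|}$ may be expanded by the ordinary multinomial theorem. The coefficient of the chosen monomial then corresponds to selecting, among the $|S|$ factors, exactly $a_j$ copies of $q_j x_{ij}$ for each $j$ and one copy of each $z_{s_l}\theta_{is_l}$; the number of such selections is the multinomial coefficient $\pchoose{|S|}{a_1, a_2, \ldots, a_m, 1^k}$. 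The relations $z_{j'}^2 = \theta_{ij'}^2 = 0$ guarantee that no barred index can be chosen twice, consistent with $\o T$ being a set.

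Finally I would collect the surviving $x$ and $\theta$ factors. Each valid selection contributes $x_{i1}^{a_1}\cdots x_{im}^{a_m}$ together with the product $z_{s_1}\theta_{is_1}\cdots z_{s_k}\theta_{is_k}$; extracting the coefficient of $z_{s_1}z_{s_2}\cdots z_{s_k}$ requires moving each $\theta_{is_l}$ past the intervening $z$ variables, which introduces no net sign because the two alphabets commute with one another, leaving exactly $\theta_{is_1}\theta_{is_2}\cdots\theta_{is_k}$. Hence the contribution from index $i$ is $\tfrac{1}{|S|}\pchoose{|S|}{a_1,\ldots,a_m,1^k}\, x_{i1}^{a_1}\cdots x_{im}^{a_m}\theta_{is_1}\cdots\theta_{is_k}$, and summing over $i=1,\ldots,n$ yields precisely $\tfrac{1}{|S|}\pchoose{|S|}{a_1,\ldots,a_m,1^k}\, p_S$ by \eqref{eq:powergen}. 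I expect the only genuinely delicate step to be the sign analysis of the second paragraph; once the pairs $z_{j'}\theta_{ij'}$ are seen to be mutually commuting and to commute with the $q_j x_{ij}$, the rest is a routine multinomial coefficient extraction.
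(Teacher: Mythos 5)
Your proof is correct and follows exactly the route the paper intends: expand $-\log(1-u_i)$ as $\sum_r u_i^r/r$, isolate $r=|S|$ by degree, and apply the multinomial theorem after checking that the pairs $z_{j'}\theta_{ij'}$ commute with each other and with the $q_jx_{ij}$ (the paper only sketches this as ``a standard calculation'' and notes the same sign cancellation and the role of $z_{j'}^2=\theta_{ij'}^2=0$ that you verify). Your write-up simply supplies the details the paper omits.
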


Using that expression, we can obtain the expansion of $E({\bf q}, {\bf z})$ by calculating
\begin{align}
E({\bf q}, {\bf z})&= \exp\left(-P(-{\bf q}, -{\bf z})\right)\nonumber\\
 &=\sum_{n\geq0} \frac{1}{n!}\left(\sum_{S} \frac{(-1)^{|S|+1}}{|S|}
\pchoose{|S|}{a_1, a_2, \ldots, a_m, 1^k}q_1^{a_1} q_2^{a_2} \cdots q_m^{a_m} z_{s_1} z_{s_2} \cdots z_{s_k}
p_S\right)^n\nonumber\\
&= \sum_S q_1^{a_1} q_2^{a_2} \cdots q_m^{a_m} z_{s_1} z_{s_2} \cdots z_{s_k} \sum_{\pi \mvdash S} (-1)^{|S|+\ell(\pi)} a_\pi p_\pi\label{eq:epowerexp}
\end{align}
where the sum is over multisets
$S = \dcl 1^{a_1}, 2^{a_2}, \ldots, m^{a_m}, \o{s}_1, \ldots, \o{s}_k\dcr$
and the inner sum is over all super multiset partitions $\pi$ of content $S$.
The reverse lexicographic order on multiset partitions was chosen so that the
parts of the multiset partition $\pi$ will have the same order
as the entries $s_1 < s_2 < \ldots < s_k$ so as not to introduce an
additional sign in the equality at Equation \eqref{eq:epowerexp}.
For a multiset of this form, let $c(S) = \pchoose{|S|}{a_1, a_2, \ldots, a_m, 1^k}$.
If the multiset partition $\pi$ is denoted with
its multiplicities as $\pi = \dcl S_1^{m_1}, S_2^{m_2}, \ldots, S_r^{m_r} \dcr$, then
the coefficient
$$a_\pi = \frac{c(S_1)^{m_1} c(S_2)^{m_2} \cdots c(S_r)^{m_r}}{
|S_1|^{m_1} |S_2|^{m_2} \cdots |S_r|^{m_r}\cdot m_1! m_2! \cdots m_r!}~.$$

The coefficients of a monomial $q_1^{a_1} q_2^{a_2} \cdots q_m^{a_m} z_{s_1} z_{s_2} \cdots z_{s_k}$
in Equation
\eqref{eq:epowerexp} give the expansion of the elementary generator in the power sum
symmetric polynomial.

In the notation of the following theorem
$S = T \cup \o{T}$ and $S'=T' \cup \o{T}'$ are two multisets with
$T, T'$ are both multisets of $[m]$ and $\o{T}, \o{T}'$ are both subsets of
$[\o{m'}]$.

Lemma \ref{lem:span} establishes that products of the power sum generators
will span the space of invariants.  The next result states that we
only need the power sum generators of degree less than or equal to $n$
to generate the space of invariants.

\begin{theorem}\label{th:gens} The set $\{ p_S \}$, running over all possible
multisets $S$ and $|S| \leq n$, is a generating set
for the ring of invariants of $\CC[X_{n \times m}; \Theta_{n \times m'}]$.
\end{theorem}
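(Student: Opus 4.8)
The plan is to exploit the finite-product structure of the generating function $E({\bf q}, {\bf z})$ in \eqref{eq:elemgf} together with its power-sum expansion \eqref{eq:epowerexp} to rewrite every power sum generator $p_S$ with $|S| > n$ as a polynomial in the generators $p_{S'}$ with $|S'| \leq n$; since Lemma \ref{lem:span} already shows that the products $p_\pi$ span the ring of invariants, this reduction will finish the proof. The first observation is that, setting $L_i = \sum_{j=1}^m q_j x_{ij} + \sum_{j'=1}^{m'} z_{j'}\theta_{ij'}$, the generating function is the \emph{finite} product $E({\bf q}, {\bf z}) = \prod_{i=1}^n (1 + L_i)$. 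Each factor $L_i$ has degree one in the auxiliary variables ${\bf q}, {\bf z}$, so $E({\bf q}, {\bf z})$ has degree at most $n$ in those variables. Therefore the coefficient of any monomial $q_1^{a_1}\cdots q_m^{a_m} z_{s_1}\cdots z_{s_k}$ of total degree $|S| = a_1 + \cdots + a_m + k > n$ vanishes; that is, the elementary generator indexed by $S$ is zero whenever $|S| > n$.

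Next I would read off from \eqref{eq:epowerexp} the precise relation that this vanishing imposes on the power sum generators. For a fixed multiset $S$ it states
$$\sum_{\pi \mvdash S} (-1)^{|S|+\ell(\pi)} a_\pi\, p_\pi = 0 \qquad \hbox{whenever } |S| > n~,$$
the sum being over all super multiset partitions of content $S$. The decisive feature is the length-one term $\pi = \dcl S \dcr$, which contributes $(-1)^{|S|+1}\tfrac{c(S)}{|S|}\,p_S$; because $c(S) = \pchoose{|S|}{a_1,\ldots,a_m,1^k}$ is a nonzero multinomial coefficient, the coefficient of $p_S$ in this relation is nonzero. Every other term has $\ell(\pi) \geq 2$, so $p_\pi = p_{S_1}\cdots p_{S_{\ell(\pi)}}$ is a product in which each part is a proper sub-multiset of $S$ and hence satisfies $|S_i| < |S|$.

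I would then solve this relation for $p_S$ and induct on $|S|$. When $|S| \leq n$ there is nothing to prove, as $p_S$ is itself one of the prescribed generators. When $|S| > n$, the relation expresses $p_S$ as a linear combination of products $p_{S_1}\cdots p_{S_{\ell(\pi)}}$ with every $|S_i| < |S|$; invoking the inductive hypothesis on each factor $p_{S_i}$ with $|S_i| > n$, and using that the factors with $|S_i| \leq n$ are generators, we conclude that $p_S$ lies in the subalgebra generated by $\{p_{S'} : |S'| \leq n\}$. Combined with Lemma \ref{lem:span}, which writes an arbitrary invariant as a linear combination of the $p_\pi$ and hence as a polynomial in the $p_S$, this shows that $\{p_S : |S| \leq n\}$ generates the ring of invariants.

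The main obstacle is really bookkeeping rather than conceptual: one must verify that the length-one coefficient $a_{\dcl S\dcr} = c(S)/|S|$ is genuinely nonzero, so that the relation can indeed be solved for $p_S$, and that the recursion terminates because every competing term strictly lowers $|S|$. Both points follow directly from the explicit formula for $a_\pi$ recorded after \eqref{eq:epowerexp}. The argument is precisely the super-analogue of the classical fact that $p_1, \ldots, p_n$ generate the symmetric polynomials in $n$ variables, where the vanishing of $e_k$ for $k > n$ plays exactly the role of the degree bound on $E({\bf q}, {\bf z})$ established above.
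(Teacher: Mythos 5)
Your proposal is correct and follows essentially the same route as the paper: observe that $E({\bf q},{\bf z})$ is a polynomial of degree at most $n$ in the auxiliary variables, deduce from \eqref{eq:epowerexp} that the resulting relation can be solved for $p_S$ when $|S|>n$, and recurse. Your explicit check that the length-one coefficient $a_{\dcl S\dcr}=c(S)/|S|$ is nonzero is a point the paper's Equation \eqref{eq:notfree1} glosses over (it omits dividing by that coefficient), so your version is if anything slightly more careful.
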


\begin{proof}
Observe that the generating function expression in
Equation \eqref{eq:elemgf} is a polynomial, so the coefficient from Equation \eqref{eq:epowerexp}
in $E({\bf q}, {\bf z})$ is equal to $0$ if $a_1+a_2+\cdots+a_m+k>n$.
This implies that for all $S = \dcl 1^{a_1}, 2^{a_2}, \ldots, m^{a_m}, \o{s}_1, \ldots, \o{s}_k\dcr$
with $|S|>n$, then
\begin{equation}\label{eq:notfree1}
p_S =  - \sum_{\substack{\pi \mvdash S\\\pi \neq \dcl S \dcr}} (-1)^{|S|+\ell(\pi)} a_\pi p_\pi~.
\end{equation}

This implies that any $p_\pi$ such that $\pi$ contains
a part $S \in \pi$ with $|S| > n$ can be expressed in terms of
$p_\pi$ with all parts smaller than or equal to $n$ by repeatedly
applying this relation.
\end{proof}

These generators are not free however and also satisfy the relation
$$p_S p_{S'} = (-1)^{|\o{T}| \cdot |\o{T}'|} p_{S'} p_S$$
if $S$ and $S'$ are not equal and $p_S^2 = 0$ if $|\o{T}|$ is odd.

\end{section}


\begin{thebibliography}{99}

\bibitem[Ait1]{Ait1} A. C. Aitken,
\textit{On induced permutation matrices and the symmetric group},
Proc. Edinburgh Math. Soc. vol 5, issue 1, (1937), 1--13.

\bibitem[Ait2]{Ait2} A. C. Aitken,
\textit{On compound permutation matrices},
Proc. Edinburgh Math. Soc. vol 7, issue 4, (1946), 196--203.


\bibitem[ALM]{ALM} L. Alarie-V\'ezina, L. Lapointe, P. Mathieu,
\textit{$N>=2$ symmetric superpolynomials},
Journal of Mathematical Physics 58, 033503 (2017).


\bibitem[Ber]{Ber} F. Bergeron
\textit{Multivariate Diagonal Coinvariant Spaces for Complex Reflection Groups},
Advances in Mathematics, 239 (2013), 97--108.

\bibitem[BPR]{BPR}
F.\,Bergeron, and L.-F.\,Pr\'eville-Ratelle,
\textit{Higher Trivariate Diagonal Harmonics via generalized Tamari Posets},
Journal of Combinatorics, {\bf{3}} (2012), 317--341.


\bibitem[CM15]{CM15} E. Carlsson and A. Mellit.
\textit{A proof of the shuffle conjecture},
{\tt arXiv:1508.06239}, August 2015.

\bibitem[C]{C} C. Chevalley, Invariants of finite groups generated by
reflections, Amer. J. Math. 77 (1955), 778-782.

\bibitem[CF]{CF} T. Church, B. Farb,
\textit{Representation theory and homological stability},
Advances in Mathematics,
Volume 245, 1 (2013), Pages 250--314.

\bibitem[CEF]{CEF}
T. Church, J. Ellenberg, B. Farb,
\textit{FI-modules and stability for
representations of symmetric groups},
Duke Math. J.,
Volume 164, Number 9 (2015), 1833--1910.

\bibitem[CR]{CR} C. Curtis and I. Reiner,
\textit{Methods of Representation Theory: With Applications to Finite Groups
and Orders}, Vol. I, Wiley, New York, 1981.

\bibitem[DLM]{DLM} P. Desrosiers, L. Lapointe, P. Mathieu,
\textit{Classical symmetric functions in superspace},
J Algebr Comb 24 (2006) 209--238

\bibitem[GoodWall]{GoodWall} R. Goodman, N. R. Wallach,
\textit{Symmetry, Representations, and Invariants}, Springer, 2009.

\bibitem[HRW]{HRW} J. Haglund, J. Remmel, A. Wilson,
\textit{The Delta Conjecture}, {\tt arXiv:1509.07058v2}~.

\bibitem[HHLRU05]{HHLRU}
J.~Haglund, M.~Haiman, N.~Loehr, J.~B. Remmel, and A.~Ulyanov, \textit{A
combinatorial formula for the character of the diagonal coinvariants}, Duke
J. Math. \textbf{126} (2005), 195--232.

\bibitem[Hai94]{Hai94} M. Haiman, \textit{Conjectures On The Quotient Ring By
Diagonal Invariants}, J. Algebraic Combin, Volume 3,  Issue 1 (1994), 17--76.

\bibitem[GR]{GR} A. Garsia and J. B. Remmel:
\textit{Shuffles of permutations
and the Kronecker product,}
Graphs and Combinatorics, 1 (1985), pp. 217--263.

\bibitem[Ges]{Ges} I. M. Gessel,
\textit{Enumerative applications of symmetric functions},
``Actes $17^{e}$ S\'eminaire Lotharingien,'' Publ.
I.R.M.A. Strasbourg, 348, 5--17, 1988.

\bibitem[KR]{KR} J. Kim, B. Rhoades,
\textit{Lefschetz theory for exterior algebras and fermionic diagonal coinvariants},
{\tt https://arxiv.org/abs/2003.10031}~.

\bibitem[Lit]{Lit} D. E. Littlewood,
\textit{Products and Plethysms of Characters with Orthogonal,
Symplectic and Symmetric Groups},
Canad. J. Math., {\bf 10}, 1958, 17--32.

\bibitem[LR]{LR} N. Loehr, J. B. Remmel,
\textit{A computational and combinatorial expos\'e of plethystic calculus},
Journal of Algebraic Combinatorics,
March 2011, Volume 33, Issue 2, pp. 163--198.

\bibitem[LW]{LW} N. Loehr, G. Warrington,
\textit{Quasisymmetric expansions of Schur-function plethysms}, Proc.
Amer. Math. Soc. 140 (2012), 1159--1171.

\bibitem[Mac]{Mac} I.~G.~Macdonald,
\newblock \textit{Symmetric Functions and Hall Polynomials},
\newblock Second Edition, Oxford University Press,
second edition, 1995.

\bibitem[MacM]{MacMahon} P.~A.~MacMahon,
\textit{Combinatory Analysis, vol 1 and 2},
Cambridge, (1915) and (1916).  Reprinted Chelsea, New York (1960).

\bibitem[NPS]{NPS} S. Narayanan, D. Paul, S. Srivastava,
\textit{The Multiset Partition Algebra},
{\tt arXiv:1903.10809}.

\bibitem[OZ]{OZ} R. Orellana, M. Zabrocki,
\textit{Characters of the symmetric group as symmetric functions},
{\tt arXiv:1605.06672}.

\bibitem[OZ2]{OZ2} R. Orellana, M. Zabrocki,
\textit{The Hopf structure of symmetric group characters
as symmetric functions},
{\tt arXiv:1901.00378}.

\bibitem[OZ3]{OZ3} R. Orellana, M. Zabrocki,
\textit{Howe duality of the symmetric group and a multiset partition algebra}, preprint.

\bibitem[Ros]{Rosas} M. Rosas,
\textit{A combinatorial overview of the theory of MacMahon
symmetric functions
and a study of the Kronecker product of Schur functions},
Ph.D. Thesis, Brandeis University, 2000.

\bibitem[Sag]{Sagan} B. Sagan, \textit{The symmetric group}. Representations, combinatorial algorithms, and
 symmetric functions, 2nd edition,  Graduate Text in Mathematics 203.  Springer-Verlag, 2001.
 xvi+238 pp.

\bibitem[sage]{sage} W.\thinspace{}A. Stein et~al.
\newblock \textit{{S}age {M}athematics {S}oftware
({V}ersion 6.10)},
The Sage Development Team, 2016, {\tt http://www.sagemath.org}.

\bibitem[sage-combinat]{sage-co}
The {S}age-{C}ombinat community.
\newblock  \textit{{{S}age-{C}ombinat}:
enhancing Sage as a toolbox for
computer exploration in algebraic combinatorics},
{{\tt http://combinat.sagemath.org}}, 2008.

\bibitem[ST]{ST} T. Scharf, J. Y. Thibon,
\textit{A Hopf-algebra approach to inner plethysm}.
Adv. in Math. 104 (1994), pp. 30--58.

\bibitem[Sol]{Sol} L. Solomon,
\textit{Invariants of finite reflection groups},
Nagoya J. Math., 22 (1963), 57--64

\bibitem[Sta]{Stanley} R.~Stanley,
\textit{Enumerative Combinatorics, Vol. ~2},
Cambridge University Press, 1999.

\bibitem[Zab19]{Zab19} M. Zabrocki,
\textit{A module for the Delta conjecture},
{\tt arXiv:1902.08966}

\end{thebibliography}
\end{document}